\newtheorem{prop}{Proposition}[section]
\newtheorem{thm}[prop]{Theorem}
\newtheorem{lem}[prop]{Lemma}
\theoremstyle{definition}
\newtheorem{rem}[prop]{Remark}
\newtheorem*{ack}{Acknowledgement}
\def\co{\colon\thinspace}
\newcommand{\C}{\mathbb C}
\newcommand{\CP}{\mathbb{C}\mathrm{P}}
\newcommand{\rmd}{\mathrm d}
\newcommand{\rme}{\mathrm e}
\newcommand{\rmi}{\mathrm i}
\newcommand{\wtK}{\widetilde{K}}
\newcommand{\wtL}{\widetilde{L}}
\newcommand{\loc}{\mathrm{loc}}
\newcommand{\MM}{\mathcal M}
\newcommand{\wtM}{\widetilde{M}}
\newcommand{\NN}{\mathcal N}
\newcommand{\bfp}{\mathbf p}
\newcommand{\bfq}{\mathbf q}
\newcommand{\wtQ}{\widetilde{Q}}
\newcommand{\R}{\mathbb R}
\newcommand{\RP}{\mathbb{R}\mathrm{P}}
\newcommand{\calS}{\mathcal{S}}
\newcommand{\wtW}{\widetilde{W}}
\newcommand{\WZ}{W_{\!Z}}
\newcommand{\wtX}{\widetilde{X}}
\newcommand{\Z}{\mathbb Z}
\newcommand{\wtZ}{\widetilde{Z}}
\DeclareMathOperator{\ev}{\mathrm{ev}}
\DeclareMathOperator{\id}{\mathrm{id}}
\DeclareMathOperator{\Int}{\mathrm{Int}}
\DeclareMathOperator{\Wh}{\mathrm{Wh}}
\begin{document}

\author[H. Geiges]{Hansj\"org Geiges}
\address{Mathematisches Institut, Universit\"at zu K\"oln,
Weyertal 86--90, 50931 K\"oln, Germany}
\email{geiges@math.uni-koeln.de}
\author[M. Kwon]{Myeonggi Kwon}
\address{Department of Mathematics Education, Sunchon National University,
Suncheon 57992, South Korea}
\email{mkwon@scnu.ac.kr}
\author[K. Zehmisch]{Kai Zehmisch}
\address{Fakult\"at f\"ur Mathematik, Ruhr-Universit\"at Bochum,
Universit\"atsstra{\ss}e 150, 44780 Bochum, Germany}
\email{kai.zehmisch@rub.de}

\title[Fillings of unit cotangent bundles]{Diffeomorphism type of
symplectic fillings of unit cotangent bundles}

\date{}

\begin{abstract}
We prove uniqueness, up to diffeomorphism, of symplectically aspherical
fillings of certain unit cotangent bundles, including those
of higher-dimensional tori.
\end{abstract}

\subjclass[2010]{57R17; 32Q65, 53D35, 57R80}
\thanks{This research is part of a project in the SFB/TRR 191
\textit{Symplectic Structures in Geometry, Algebra and Dynamics}, 
funded by the DFG (Projektnummer 281071066 -- TRR 191)}

\maketitle


\section{Introduction\label{sec:intro}}
We consider the cotangent bundle $T^*L$
of a closed, connected manifold $L$ with its
Liouville form $\bfp\,\rmd\bfq$, which induces a contact
form on the unit cotangent bundle $ST^*L$ (with respect to any
Riemannian metric on~$L$). In what follows, it will always be understood
that $ST^*L$ is endowed with this natural contact structure.
The unit disc bundle $(DT^*L, \rmd\bfp\wedge\rmd\bfq)$, equipped with
its canonical symplectic form, is a strong symplectic (in fact, Stein)
filling of $ST^*L$. For basic notions of different types
of fillings see~\cite[Chapter~5]{geig08}.

It is a fundamental question in symplectic topology
in how many ways the contact manifold $ST^*L$
can be written as the boundary of a symplectic filling $(W,\omega)$.
For $ST^*S^2=\RP^3$ it was shown by McDuff~\cite{mcdu90}
that the unit disc bundle is the unique symplectically aspherical
(or minimal) strong filling up to diffeomorphism, and up to symplectomorphism
if one fixes the cohomology class of the symplectic form. For Stein fillings
of $ST^*S^2$, uniqueness up to Stein homotopy was established by
Hind~\cite{hind00}. Stipsicz~\cite{stip02} showed that
all Stein fillings of $ST^*T^2=T^3$ are homeomorphic to
the unit disc bundle $DT^*T^2=D^2\times T^2$.
Wendl~\cite{wend10} strengthened that last result to
uniqueness up to diffeomorphism. In fact, he proved uniqueness up
to deformation equivalence for all minimal symplectic fillings of~$T^3$.
Regarding higher genus surfaces, Sivek and Van Horn-Morris~\cite{siva17}
showed that every Stein filling of $ST^*\Sigma_g$, $g\geq 2$, is
$s$-cobordant rel boundary to the unit disc bundle $DT^*\Sigma_g$;
see also~\cite{lmy17}.

These $4$-dimensional results are typically based on techniques not
available in higher dimensions, such as foliations by holomorphic curves.

The question about the analogue in higher dimensions of the
result for $T^2$ was posed to
us by Otto van Koert and Andr\'as Stipsicz, and we answer it in the
following theorem.  From now on, `filling' without any further
specification always means strong symplectic filling.

\begin{thm}
\label{thm:torus}
The diffeomorphism type of symplectically aspherical fillings
of $ST^*T^n$ is unique.
\end{thm}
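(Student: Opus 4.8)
The plan is to exhibit any symplectically aspherical filling $(W,\omega)$ of $ST^*T^n$ as the total space of a fibration whose fibres are holomorphic annuli, modelled on one already present in $DT^*T^n=T^n\times D^n$, and then to read off $W\cong T^n\times D^n$ from this fibration. I would begin with the model. Equip $T^n=\R^n/\Z^n$ with the flat metric, so that $ST^*T^n=T^n\times S^{n-1}$ and the Reeb flow is the linear flow $(\bfq,\bfp)\mapsto(\bfq+t\bfp,\bfp)$. This flow is Morse--Bott: its closed orbits of shortest period, $1$, are those winding once around a coordinate circle, and for each of the $2n$ primitive directions $\pm e_i$ they form a family parametrised by $T^{n-1}$. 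Identifying $T^*T^n$ symplectically with $(\C^\ast)^n$ carrying a cylindrical K\"ahler form, $DT^*T^n$ becomes a neighbourhood of the Clifford torus $\{|w_j|=1\}$, and for each $i$ the slices $\{w_j=\mathrm{const}:j\neq i\}$ foliate it by embedded holomorphic annuli, each meeting $ST^*T^n$ in its two boundary circles; after completion, these become finite-energy holomorphic cylinders with two positive ends, asymptotic to orbits in the two families attached to $+e_i$ and to $-e_i$. The $i$-th foliation, together with its behaviour along those boundary orbits, recovers $DT^*T^n$ as the total space of a natural fibration over $T^{n-1}\times D^{n-1}$, hence recovers $T^n\times D^n$.

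To transport this to a general $(W,\omega)$, I would complete it to $\wtW=W\cup(ST^*T^n\times[1,\infty))$, fix $i=n$, pick a generic compatible almost complex structure $J$ on $\wtW$ that is cylindrical on the end and standard near $ST^*T^n$, and study the moduli space $\MM$ of finite-energy $J$-holomorphic cylinders with one positive end asymptotic to the $T^{n-1}$-family over $+e_n$ and the other to the one over $-e_n$, in the homotopy class of the model cylinders. The argument then needs: (i) $\MM\neq\emptyset$; (ii) $\MM$ is compact, via Gromov/SFT compactness, using that symplectic asphericity forces every $J$-holomorphic sphere in $\wtW$ to be constant (all such spheres have vanishing $\omega$-energy), that the asymptotic orbits realise the minimal period $1$, and that the period spectrum of the flat metric is discrete, so that the compactness limit contains no nontrivial buildings; (iii) that for a suitable $J$ the curves in $\MM$ are Fredholm-regular in the Morse--Bott sense, embedded, and pairwise disjoint, so that $\MM$ is a closed manifold of the model dimension and its members foliate $\wtW$ exactly as the model cylinders foliate $T^*T^n$. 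Granting (i)--(iii), identifying $\MM$ with $T^{n-1}\times D^{n-1}$ and matching the resulting fibration (and its degeneration along the asymptotic orbit families) with the model one yields $W\cong T^n\times D^n=DT^*T^n$.

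The main obstacle is the package (i)--(iii) outside dimension four. Unlike for $ST^*S^2=\RP^3$ and $ST^*T^2=T^3$, there is no closed symplectic manifold that caps $ST^*T^n$ off --- for instance $T^n\times S^n$ admits no symplectic form, since $H^2(T^n\times S^n)=H^2(T^n)$ for $n\geq 3$ and every class $[\omega]$ in there satisfies $[\omega]^n=0$ --- so the non-emptiness (i) cannot be extracted from an algebraic count in a closed manifold and has to be produced by hand, e.g.\ by a neck-stretching/continuation argument comparing $\wtW$ with the standard filling $DT^*T^n$, or via symplectic homology. Likewise, in (iii) one can no longer appeal to automatic transversality or to positivity of intersections, so the regularity, embeddedness and disjointness of the cylinders, and the absence of hidden nodal or multiply covered leaves, must be forced using the explicit symmetry of the flat geodesic flow together with a careful choice of $J$ --- with symplectic asphericity again doing the essential work of excluding the degenerate configurations. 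For $n=2$ the whole scheme specialises to Wendl's analysis of minimal fillings of $T^3$.
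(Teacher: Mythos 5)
Your strategy --- foliating the completed filling by finite-energy holomorphic cylinders asymptotic to the Morse--Bott families of short Reeb orbits and reading off $W\cong T^n\times D^n$ from the resulting fibration --- is a direct generalisation of Wendl's proof for $T^3$, and the package (i)--(iii) on which it rests is exactly what breaks down in dimension $2n\geq 6$. In Wendl's argument the leaves are embedded, pairwise disjoint, Fredholm regular and free of multiply covered or nodal degenerations because of positivity of intersections, adjunction and automatic transversality, all of which are strictly four-dimensional tools; in higher dimensions there is no known mechanism that makes a moduli space of holomorphic cylinders in an \emph{unknown} filling foliate it, and your text concedes that (i) non-emptiness and (iii) regularity, embeddedness and disjointness ``must be forced by hand'' without supplying an argument. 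Compactness (ii) is also not a matter of the period spectrum alone: SFT breaking could produce finite-energy planes in $W$ capping off a period-one orbit, and ruling this out requires knowing that such orbits are non-contractible in $W$, i.e.\ control over $\pi_1(W)$, which is not available a priori (it is one of the things to be proved). Finally, even granting a compact regular moduli space with a degree-one evaluation map, in dimension $2n\geq 6$ this gives only homotopical and homological information, not a diffeomorphism; some surgery-theoretic input is unavoidable. So as it stands the proposal is a programme, not a proof, and the unproved steps are the genuinely hard ones.

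For comparison, the paper avoids punctured curves in the filling altogether: it uses the Lagrangian embedding $T^n=T^{n-1}\times S^1\subset T^*T^{n-1}\times\C$, replaces $DT^*T^n$ by the given filling $W$ inside $T^*T^{n-1}\times\CP^1$, and studies \emph{closed} holomorphic spheres in the class of $\{pt\}\times\CP^1$ meeting three fixed complex hypersurfaces. There compactness follows from symplectic asphericity together with positivity of intersection with the hypersurfaces $H_0,H_\infty$ (plus the moving-hypersurface argument under (H-ii)), and the degree-one evaluation map yields $\pi_1$-surjectivity of $M\rightarrow W$; lifting the curves to infinite holomorphic strips over the universal cover gives a homology epimorphism $\widetilde{T^n}\rightarrow\wtW$. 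The conclusion is then topological: the cobordism between $ST^*T^n$ and a standard copy $W_0\simeq DT^*T^n$ inside $W$ is shown to be an $h$-cobordism using that $ST^*T^n$ is a simple space, and the $s$-cobordism theorem with $\Wh(\Z^n)=0$ yields the diffeomorphism $W\cong DT^*T^n$. If you want to pursue your route, you would in effect have to re-prove these homotopical statements anyway, and the foliation picture would still have to be abandoned in favour of degree-theoretic use of the evaluation map.
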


\begin{rem}
While the present paper was in preparation,
Theorem~\ref{thm:torus} has independently been obtained by
Bowden--Gironella--Moreno
\cite{bgm19} as an application of their extensive study of
Bourgeois contact structures, which they use to replace some
parts of Wendl's argument and then combine with results
from~\cite{bgz}. Our proof, by contrast, is based
directly on a refinement of the techniques developed in~\cite{bgz}.
As we shall explain, our approach establishes uniqueness of fillings of
unit cotangent bundles for a considerably larger class of base manifolds.
In the latest version of their paper, Bowden--Gironella--Moreno
more closely follow our approach and
also use holomorphic spheres rather than punctured holomorphic discs;
see~\cite[Remark~3]{bgm19}.
\end{rem}

The earliest results about the diffeomorphism type of symplectic fillings
in higher dimensions (after Gromov's work~\cite{grom85} in dimension four)
are due to Eliashberg--Floer--McDuff \cite{mcdu91}.
By treating evaluation maps on moduli spaces of holomorphic
curves with methods from algebraic topology, they
proved that every symplectically aspherical filling of the sphere $S^{2n-1}$
equipped with the standard contact structure is diffeomorphic
to the disc~$D^{2n}$. Uniqueness of these fillings
up to symplectomorphism is known only for $n=1,2$.

Starting point for the classification of fillings is an
understanding of homological restrictions.
Oancea--Viterbo \cite{oavi12} showed that the inclusion of
a subcritically Stein fillable contact manifold into any of its 
symplectically aspherical fillings induces a surjection in homology.
Ghiggini--Niederkr\"uger--Wendl \cite{gnw16}
found obstructions on the relative homology
of semi-positive symplectic fillings
in terms of belt spheres of subcritical handles.

The degree method from \cite{bgz} systematically combines
the filling by holomorphic curves technique
with the $s$-cobordism theorem and yields uniqueness, up to diffeomorphism,
of \emph{subcritical} Stein fillings for a wide range of contact manifolds.
In \cite{kwze} it was shown that the subcriticality assumption
can be dropped if instead one requires the existence
of a complex hypersurface in the filling having a suitable intersection
behaviour. As an application, it was shown there
that for $ST^*S^{2d+1}$ such types of fillings are unique
up to diffeomorphism. Critical fillings were also studied by
Lazarev~\cite{laza}, who proved uniqueness results up to
symplectomorphism (after completion) for certain classes
of flexible fillings.

In the present paper we show that the subcriticality assumption
on the filling made in \cite{bgz} can be dropped in situations where suitable
topological information is available on the manifold that is to be filled.
Our arguments apply to unit cotangent bundles $ST^*L$ of
closed, connected manifolds $L$
that admit a Lagrangian embedding into a subcritical Stein manifold.
Assuming that the second relative homology of a symplectic filling
$(W,\omega)$ of $ST^*L$ vanishes,
we first prove homological uniqueness of the filling,
i.e.\ that $H_*(W)$ is isomorphic to $H_*(DT^*L)$.
This is the content of Theorem~\ref{thm:homologytype}, where the situation
is analysed in a slightly more general setting.
If in addition $L=Q\times S^1$ with $\chi(Q)=0$, we show in
Theorem~\ref{thm:inducedmap}
that this homology isomorphism is induced by an embedding $DT^*L\rightarrow W$.

By a result of Chekanov \cite{chek98}, the fundamental group of a
Lagrangian submanifold in a subcritical Stein manifold
contains an element of infinite order,
see Proposition~\ref{prop:chekanov}.
So it is quite natural to consider manifolds $L=Q\times S^1$
that split off a circle factor.
There is an obvious Lagrangian embedding of $L$ into the subcritical
manifold $T^*Q\times\C$ that allows filling by holomorphic curves.
Using a filling by holomorphic annuli
we show that the inclusion
$L\rightarrow W$ is surjective on $\pi_1$,
see Theorem \ref{thm:pi1surjective}.
A filling by infinitely long holomorphic strips
can be used to show that the lifted inclusion $\wtL\rightarrow\wtW$
of universal covers --- which exists when the inclusion
$ST^*L\rightarrow W$ is $\pi_1$-injective --- is surjective on $H_*$,
see Proposition \ref{prop:homsurj}.
Arguments parallel to \cite{bgz} then lead us to the main result of the
present paper.

\begin{thm}
\label{thm:main}
(a) Suppose that $Q$ is a closed manifold of dimension at least three,
with Euler characteristic $\chi(Q)=0$,
satisfying one of the following assumptions:
\begin{itemize}
\item[(i)] $Q$ is a product manifold of the form $Q=N\times F$,
where $F$ is a surface different from $S^2$ and $\R P^2$.
\item[(ii)] $Q$ is aspherical.
\end{itemize}
Then any Stein filling of $ST^*(Q\times S^1)$
is homotopy equivalent to $DT^*(Q\times S^1)$.

\vspace{2mm}

(b) If $Q$ is a product of unitary groups and spheres, including at least one
$S^1$-factor, then any symplectically aspherical
filling of $ST^*(Q\times S^1)$
is diffeomorphic to $DT^*(Q\times S^1)$.
\end{thm}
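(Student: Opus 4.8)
The plan is to prove that every such filling $W$ contains an embedded copy of the model $DT^*(Q\times S^1)$ whose complement in $W$ is an $h$-cobordism --- indeed an $s$-cobordism in the situation of~(b) --- so that $W$ is homotopy equivalent, respectively diffeomorphic, to $DT^*(Q\times S^1)$; this follows the strategy of~\cite{bgz}, with the holomorphic-curve results recalled above for the Lagrangian $L=Q\times S^1$ as the new input. Write $n=\dim L=\dim Q+1$, so that $\dim W=2n\ge 6$ unless $Q=S^1$ (the $T^3$ case, due to Wendl), and $\dim W\ge 8$ under the hypotheses of~(a), whence the $s$-cobordism theorem is available. Note that $\chi(Q)=0$ in all cases --- in~(i) because $\chi(F)=0$ for a torus or Klein bottle and $\chi(N)=0$ otherwise, in~(ii) by hypothesis, in~(b) because $\chi(U(k))=\chi(S^1)=0$ and $Q$ has an $S^1$-factor --- hence $\chi(L)=\chi(Q)\chi(S^1)=0$, so $e(T^*L)=0$ and the sphere bundle $ST^*L\to L$ admits a section and is therefore split-surjective on homology with arbitrary local coefficients. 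The zero section of $T^*Q$ together with the unit circle in $\C$ realises $L$ as a Lagrangian submanifold of the subcritical Stein manifold $T^*Q\times\C$ with $DT^*L$ a Weinstein neighbourhood, and it is this Lagrangian embedding that makes the holomorphic-curve technique applicable.

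\textbf{Homological uniqueness and a model embedding.} I would first check the hypothesis $H_2(W,ST^*L)=0$ of Theorem~\ref{thm:homologytype}. For a Stein filling this is automatic, since $W$ has the homotopy type of a CW complex of dimension $\le n$ and hence $H_2(W,\partial W)\cong H^{2n-2}(W)=0$ by Lefschetz duality, as $2n-2>n$. For the symplectically aspherical fillings occurring in~(b) the same vanishing is a by-product of the holomorphic-curve analysis. Theorem~\ref{thm:homologytype} then gives $H_*(W)\cong H_*(DT^*L)$, and, as $\chi(Q)=0$, Theorem~\ref{thm:inducedmap} realises this isomorphism by an embedding $j\colon DT^*L\hookrightarrow W$. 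Put $C=W\setminus\Int j(DT^*L)$, a cobordism from $\partial_-C:=j(\partial DT^*L)$ to $\partial_+C:=\partial W$, each a copy of $ST^*L$. By excision $H_*(C,\partial_-C)\cong H_*\big(W,j(DT^*L)\big)=0$, and Lefschetz duality for $C$ together with the universal coefficient theorem gives $H_*(C,\partial_+C)=0$ as well; thus both inclusions $\partial_\pm C\hookrightarrow C$ are integral homology equivalences. Since $n-1\ge 3$, the bundle projection makes $\pi_1(ST^*L)\to\pi_1(DT^*L)$ an isomorphism, so van Kampen applied to $W=j(DT^*L)\cup_{\partial_-C}C$ identifies $\pi_1(C)$ with $\pi_1(W)$ via the inclusion.

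\textbf{From a homology cobordism to a diffeomorphism.} The composite $\pi_1(L)\to\pi_1(W)$ coming from the zero section $L\hookrightarrow DT^*L$ and $j$ is surjective by Theorem~\ref{thm:pi1surjective}. In case~(b) it is automatically bijective: $\pi_1(L)\cong\Z^r$ is abelian and $H_1(W)\cong H_1(DT^*L)=\Z^r$, which forces the surjection to be an isomorphism onto $H_1(W)=\pi_1(W)^{\mathrm{ab}}$, so $\pi_1(W)$ is abelian and the map is an isomorphism of copies of~$\Z^r$. In case~(a), the matching $\pi_1$-injectivity is supplied by the refined holomorphic-curve analysis. Either way $\pi_1(L)\cong\pi_1(W)\cong\pi_1(C)$ and both $\partial_\pm C\hookrightarrow C$ are $\pi_1$-isomorphisms, so $j(DT^*L)$ lifts to the universal cover $\wtW$ as $\widetilde{DT^*L}\simeq\wtL$. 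Proposition~\ref{prop:homsurj} now makes $H_*(\wtL)\to H_*(\wtW)$ surjective; combining this with the section of $ST^*L\to L$, the integral computation above, and Poincar\'e--Lefschetz duality over $\Z[\pi_1]$ --- exactly as in~\cite{bgz} --- one gets $H_*\big(\widetilde{C},\widetilde{\partial_\pm C}\big)=0$. By Whitehead's theorem $\partial_\pm C\hookrightarrow C$ are homotopy equivalences, so $C$ is an $h$-cobordism, and the gluing lemma gives $W=DT^*L\cup_{ST^*L}C\simeq DT^*(Q\times S^1)$; this proves~(a). For~(b) one has $\Wh(\pi_1C)=\Wh(\Z^r)=0$ by Bass--Heller--Swan, so the $s$-cobordism theorem yields $C\cong ST^*(Q\times S^1)\times[0,1]$ and hence the diffeomorphism $W\cong DT^*(Q\times S^1)$.

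\textbf{The main difficulty.} The hard part is the fundamental group. Already the surjectivity of $\pi_1(L)\to\pi_1(W)$ costs a filling by holomorphic annuli (Theorem~\ref{thm:pi1surjective}), and in case~(a), where the abelian shortcut of~(b) is unavailable, establishing the matching $\pi_1$-injectivity --- and then promoting the integral homological uniqueness of Theorem~\ref{thm:homologytype} to a homology equivalence of universal covers, via the filling by infinitely long holomorphic strips (Proposition~\ref{prop:homsurj}) and $\Z[\pi_1]$-duality --- is where the real work lies. Once that is in hand, the vanishing of $\Wh(\Z^r)$ is the single extra ingredient separating the diffeomorphism of~(b) from the homotopy equivalence of~(a).
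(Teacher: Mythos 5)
Your outline reproduces the paper's strategy (embed the model $W_0\cong DT^*(Q\times S^1)$ via Theorem~\ref{thm:inducedmap}, form the cobordism $X=W\setminus\Int(W_0)$, show it is an $h$-, respectively $s$-cobordism), but two steps are asserted where the actual work lies, and in both places the mechanism you name is not the one that works.

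First, the $\pi_1$-injectivity of $M\rightarrow W$ in case~(a) is \emph{not} ``supplied by the refined holomorphic-curve analysis'': Theorem~\ref{thm:pi1surjective} and Proposition~\ref{prop:homsurj} only ever give surjectivity (on $\pi_1$, respectively on $H_*$ of universal covers). Injectivity comes from the Stein hypothesis: a Stein filling of dimension $2n\geq 6$ has a handle decomposition with handles of index $\leq n$, so $\pi_2(W,M)=0$ by general position, whence $\pi_1(M)\rightarrow\pi_1(W)$ is injective. Without naming this source the argument for (a) does not close, since in (a)(i) the group $\pi_1(Q)$ need not be abelian and the shortcut of Remark~\ref{rem:pi1-abelian} is unavailable.

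Second, the jump from ``$H_*(\wtL)\rightarrow H_*(\wtW)$ is surjective'' to ``$H_*(\wtX,\widetilde{\partial_{\pm}X})=0$ by $\Z[\pi_1]$-duality as in \cite{bgz}'' hides the real difficulty: surjectivity gives only half of the required isomorphism, and the injectivity has to be extracted differently in each case. In (a)(ii) one uses contractibility of $\wtQ$ (so $\wtW$ is acyclic and simply connected, hence contractible); in (a)(i) $Q=N\times F$ need not be aspherical, and one instead needs the cell structure of the Stein filling together with the vanishing of $H_{n-1}(\wtQ)$ and $H_{n-2}(\wtQ)$ (which you would have to verify for $N\times F$) --- this is Theorem~\ref{thm:Stein}; in (b) one needs $M$ to be a \emph{simple} space so that \cite[Lemmas~8.1 and 8.2]{bgz} apply, a hypothesis you never check for products of unitary groups and spheres. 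Moreover, your claim that \emph{both} boundary inclusions are homotopy equivalences in case~(a) is exactly the step the paper warns against: the duality argument for the upper inclusion $M\rightarrow X$ requires simplicity of $M$ and breaks down under the aspherical hypothesis (which is why (a) only yields a homotopy equivalence, via the lower inclusion alone, and not a diffeomorphism).
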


Part (a) of this theorem will be proved in
Section~\ref{subsection:homotopy-type};
part (b), in Section~\ref{subsection:difftype}.
Theorem~\ref{thm:torus} is an obvious corollary of part~(b).
Actually, we are going to prove more general statements
in Theorems~\ref{thm:Stein} and~\ref{thm:diff-type}. In the theorem above
we only listed the most obvious examples illustrating those theorems.

The main technical innovations in this paper that allow us
to go beyond the results of \cite{bgz} are the
\emph{moving complex hypersurface argument} in
Section~\ref{subsection:pi1} and the \emph{analysis of infinite
holomorphic strips} in Section~\ref{subsection:strips}.

A brief word on notation: We write $D_r$ for the \emph{closed} disc of
radius $r$ in the complex plane~$\C$, centred at~$0$. The \emph{open}
disc will be denoted by~$B_r$, that is, $B_r=\Int(D_r)$.
\section{Domains in subcritical Stein manifolds
\label{sec:dominstein}}
The homology of symplectically aspherical fillings
of contact manifolds that are subcritically Stein fillable
is unique, see \cite[Theorem 1.2]{bgz}.
As we shall see in this section, uniqueness of homology
holds also for all symplectically aspherical fillings
of contact type hypersurfaces in subcritical Stein manifolds
that are not necessarily a level set of a corresponding
plurisubharmonic function.
Examples are given by the boundaries of
Weinstein tubular neighbourhoods 
of closed Lagrangian submanifolds, which by
\cite[Proposition~3.9]{bgz} are not subcritically Stein fillable.
\subsection{The Oancea--Viterbo argument revisited}
\label{sec:OVargrev}
Let $(M_Z,\xi_Z)$ be a $(2n-1)$-dimensional
contact manifold that admits a subcritical Stein filling
$(Z,\omega_Z)$. This means that the plurisubharmonic Morse function
given by the Stein structure does not have any critical points of index~$n$.

Consider a closed, connected contact type hypersurface $(M,\xi)$
in $(Z,\omega_Z)$, disjoint from $\partial Z$. Observe
that $M$ is separating because $H_{2n-1}(Z)=0$.
Denote by $D_Z\subset Z$ the closure of the component of $Z\setminus M$
not containing $\partial Z$.

\begin{rem}
As shown in \cite[Theorem 3.4]{geze12},
the contact manifold $(M,\xi)$ is a \emph{convex} boundary of
the symplectic manifold $\big(D_Z,\omega_Z|_{D_Z}\big)$,
so the latter constitutes a symplectic filling.
Alternatively, one may appeal to~\cite[Remark~3.3]{suze17},
which gives an elementary argument. In order to apply either
reference in the present setting, one needs to appeal to
Cieliebak's splitting theorem for subcritical Stein manifolds
\cite[Section~14.4]{ciel12}.

The symplectic form $\omega_Z|_{D_Z}$ is exact, but it need not be of
Liouville type, i.e.\ there need not be a primitive $1$-form
of $\omega_Z|_{D_Z}$ that restricts to a contact form for $\xi$
on the boundary~$M$.
\end{rem}

\begin{thm}
\label{thm:homologytype}
Let $(W,\omega)$ be a symplectic filling of $(M,\xi)$
that satisfies one of the following conditions:
\begin{enumerate}
\item [(i)] $H_2(W,M)=0$;
\item [(ii)] $M$ is simply connected and $(W,\omega)$ symplectically
aspherical;
\item [(iii)] $H^1(M;\R)=0$ and $\omega$ is exact.
\end{enumerate}
Then $W$ and $D_Z$ have isomorphic homology.
\end{thm}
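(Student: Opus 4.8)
The plan is to glue the filling $(W,\omega)$ to the complementary piece $Z\setminus\Int(D_Z)$ of the subcritical Stein manifold along the common contact boundary $(M,\xi)$, producing a closed (or at least ``homologically closed'') symplectic object, and then to run the Eliashberg--Floer--McDuff / Oancea--Viterbo evaluation-map argument on it. Concretely, I would first observe that since $M$ is separating in $Z$ with $D_Z$ the inner piece, we may form $\wtW := W \cup_M (Z\setminus\Int D_Z)$. The Stein structure on $Z$, away from the Lagrangian core handles, is subcritical, so the complementary piece $Z\setminus\Int D_Z$ deformation retracts onto a subcritical (hence $\le n-1$-dimensional) CW-complex relative to $\partial Z$; this is where Cieliebak's splitting theorem is invoked, exactly as in the remark. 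The point of the construction is that $\wtW$ contains the moduli space of holomorphic planes (or the relevant filling-by-holomorphic-curves family) coming from the subcritical end of $Z$, because near $\partial Z$ the two manifolds $\wtW$ and $Z$ agree.

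Next I would set up the holomorphic curve moduli space. Choose an $\omega$-tame almost complex structure on $\wtW$ that is cylindrical near the subcritical end and integrable there; the subcritical Stein end carries a family of holomorphic planes (the Bishop family emanating from the subcritical handles, or more invariantly the planes constructed in \cite{bgz}), and the symplectic asphericity / exactness hypotheses rule out bubbling of spheres, so by Gromov compactness the moduli space $\MM$ of such planes, together with its evaluation map $\ev\co \MM \to \wtW$, is compact modulo breaking into buildings whose extra levels live in the (subcritical, hence homologically simple) end. This is precisely the mechanism of \cite[Theorem~1.2]{bgz}: the evaluation map has degree one onto a subset and, combined with a Mayer--Vietoris / Alexander duality computation, forces $H_*(\wtW)$ to coincide with $H_*(Z)$. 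Since $Z$ is Stein subcritical, $H_*(Z)=H_*(Z\setminus\Int D_Z)$ in low degrees, and the relevant long exact sequences then pin down $H_*(\wtW)$.

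From $H_*(\wtW)\cong H_*(Z)$ I would extract $H_*(W)\cong H_*(D_Z)$ by Mayer--Vietoris for the decomposition $\wtW = W\cup (Z\setminus\Int D_Z)$, and separately $Z = D_Z\cup(Z\setminus\Int D_Z)$, using that the two ``complement'' pieces are literally the same manifold and that $H_*(Z\setminus\Int D_Z)$ sits in degrees $\le n-1$ while the interesting classes one is chasing are controlled by the evaluation map. Each of the three hypotheses (i)--(iii) enters only at the stage of excluding bubbling / ensuring the moduli space behaves: (i) $H_2(W,M)=0$ directly kills the relative spheres that could bubble; (ii) simple connectivity of $M$ plus asphericity of $W$ gives the same vanishing; (iii) exactness of $\omega$ together with $H^1(M;\R)=0$ lets one promote exactness to the glued manifold and again suppress bubbling. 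So the proof is: reduce all three cases to ``no holomorphic spheres of positive area in $\wtW$,'' then quote the degree argument.

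The main obstacle I expect is not the topology bookkeeping but the compactness/transversality input for the moduli space in this non-subcritical setting: in \cite{bgz} the filling itself was assumed subcritical, whereas here only $Z$ is, and $W$ (equivalently $D_Z$) may carry critical handles. One must therefore be careful that the holomorphic planes stay in the region where the almost complex structure is well-controlled, i.e.\ that they cannot escape deep into $W$ in an uncontrolled way and that the breaking analysis still only produces extra levels in the subcritical end — this is exactly the kind of ``the curves see only the subcritical part'' phenomenon that the paper's later \emph{moving complex hypersurface} and \emph{infinite strip} refinements are designed to handle, but for the homological statement the original \cite{bgz} monotonicity/energy estimates, applied to $\wtW$ rather than to $W$ alone, should suffice. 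A secondary technical point is verifying that $\omega_Z|_{D_Z}$, although exact, glues to $\omega$ on $W$ to give a genuine (tame, and in cases (ii),(iii) aspherical) symplectic form on $\wtW$; this is where the remark's caveat that $\omega_Z|_{D_Z}$ need not be of Liouville type must be navigated, but tameness and asphericity — which is all the curve count needs — are preserved under the gluing.
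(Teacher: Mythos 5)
Your overall strategy is the paper's: form $\WZ=W\cup_M\bigl(Z\setminus\Int(D_Z)\bigr)$, use each of (i)--(iii) only to establish symplectic asphericity (resp.\ exactness) of the glued manifold, invoke the homology-uniqueness theorem \cite[Theorem~1.2]{bgz} to get $H_*(\WZ)\cong H_*(Z)$, and then compare the two decompositions $\WZ=W\cup(Z\setminus\Int D_Z)$ and $Z=D_Z\cup(Z\setminus\Int D_Z)$. (One reassurance: your worry that \cite{bgz} assumes the \emph{filling} is subcritical is unfounded for the homology statement --- Theorem~1.2 there only requires the contact manifold $M_Z=\partial Z$ to be subcritically fillable, and $\WZ$ is an arbitrary aspherical filling of it; the subcriticality of the filling matters only for the later degree/$s$-cobordism arguments.)

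The genuine gap is in your final extraction step. Mayer--Vietoris gives $H_k(M)\cong H_k(W)\oplus H_k(\WZ\setminus W)$ and $H_k(M)\cong H_k(D_Z)\oplus H_k(Z\setminus D_Z)$ only in degrees $k\geq\ell+1$, where $\ell\leq n-1$ is the dimension of the CW complex underlying $Z$, because only there do $H_k(\WZ)\cong H_k(Z)$ and $H_{k+1}(\WZ)\cong H_{k+1}(Z)$ vanish so that the sequences split; cancellation of the common summand then gives $H_k(W)\cong H_k(D_Z)$ in that range. In low degrees $k\leq\ell$ the abstract isomorphism $H_*(\WZ)\cong H_*(Z)$ is not induced by any map, so you cannot compare the two Mayer--Vietoris sequences by a five-lemma argument, and your auxiliary claim that $H_*(Z\setminus\Int D_Z)$ is concentrated in degrees $\leq n-1$ is false: by Lefschetz duality $H^{2n-1-k}(Z\setminus D_Z)\cong H_k(D_Z)$ for small $k$, so the complement has nontrivial (co)homology in degrees up to $2n-1$. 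The missing ingredient is a duality argument: combine Poincar\'e--Lefschetz duality with excision to get $H_k(W)\cong H^{2n-k}(\WZ,\WZ\setminus W)$, note that for $k\leq\ell$ one has $2n-1-k\geq\ell+1$ so the cohomology of $\WZ$ vanishes in degrees $2n-1-k$ and $2n-k$ (universal coefficients plus freeness of $H_\ell$), whence the connecting homomorphism $H^{2n-1-k}(\WZ\setminus W)\to H^{2n-k}(\WZ,\WZ\setminus W)$ is an isomorphism; applying the same to $(Z,D_Z)$ and using that $\WZ\setminus W=Z\setminus D_Z$ closes the low-degree case. Without this (or an equivalent) step your proof only establishes the isomorphism above the middle dimension.
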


\begin{rem}
\label{rem:OV}
(1) Under assumption (i) or (iii) of the theorem, $(W,\omega)$
is symplectically aspherical for obvious (co-)homological
reasons.

(2) Under assumption (i) or~(ii),
Oancea--Viterbo proved in \cite{oavi12} that
the inclusion map $M\rightarrow W$ induces a surjection in homology,
cf.\ \cite[Theorem 3.2]{bgz}.

(3) It was shown in \cite[Proposition 3.5]{bgz} that the normal subgroup in
$\pi_1(W)$ generated by the image of $\pi_1(M)$ in $\pi_1(W$) is equal to
$\pi_1(W)$. This group-theoretical property by itself does not imply that the
inclusion map $M\rightarrow W$ induces a surjection of fundamental groups.
For example, the normal closure of any non-trivial subgroup
of the alternating group of degree five $A_5$, such as the Klein four-group,
is equal to the full group $A_5$, because $A_5$ is a simple group.
\end{rem}

\begin{proof}[Proof of Theorem \ref{thm:homologytype}]
We define a new symplectic manifold
\begin{equation}
\label{eqn:WZ}
(\WZ,\Omega_Z):= (W,\omega)\cup_{(M,\xi)}\big(Z\setminus\Int(D_Z),
\omega_Z\big)
\end{equation}
by replacing $D_Z$ in $Z$ with $W$.
Any of the assumptions (i)--(iii) in Theorem \ref{thm:homologytype}
implies that $(\WZ,\Omega_Z)$ is a symplectically aspherical filling of
$(M_Z,\xi_Z)$:
The argument for assumptions (i) and (ii) is given in
\cite[Remark~3.3~(2) and Lemma~3.4]{bgz};
in case (iii) a straightforward argument in de Rham theory
shows that $\Omega_Z$ is globally exact.
In fact, in this last case one can find a global primitive for $\Omega_Z$ that
restricts to a contact form for $\xi$ on~$M$, so $(M, \xi)$
is of restricted contact type in $(\WZ,\Omega_Z)$.
Therefore, in all three cases we can appeal to \cite[Theorem 1.2]{bgz}
to conclude that $H_k(\WZ)\cong H_k(Z)$ for all $k\in\Z$.
 
Denote by $\ell$ the dimension of the CW complex
obtained from $Z$ by following the negative gradient flow
of the plurisubharmonic Morse function of the Stein structure on~$Z$.
Since the Stein structure is assumed to be subcritical, we
have $\ell\leq n-1$. Then $H_k(\WZ)$ is trivial for $k\geq\ell+1$,
and a free abelian group for $k=\ell$, since there are no $n$-cells.
The Mayer--Vietoris sequence yields
\[ H_k(W)\oplus H_k(\WZ\setminus W) \cong H_k(M)\cong
H_k(D_Z)\oplus H_k(Z \setminus D_Z)\;\;\text{for $k\geq\ell+1$}. \] 
Since $\WZ\setminus W=Z \setminus D_Z$ 
it follows that $H_k(W)\cong H_k(D_Z)$ 
for $k\geq\ell+1$.

It remains to prove $H_k(W)\cong H_k(D_Z)$ for $k=0,1,\ldots,\ell$.
We first observe, by combining Poincar\'e duality and excision in cohomology,
that
\[ H_k(W)\cong H^{2n-k}(\WZ,\WZ\setminus W).\]
Secondly, since $\ell\leq n-1$, the condition $k\leq\ell$
translates into $2n-1-k\geq\ell+1$. By using the universal coefficient
theorem in the form $H^i=FH_i\oplus TH_{i-1}$, with $F,T$ denoting the
free and the torsion part, respectively, we see that the cohomology groups
of $\WZ$ vanish in degree $2n-1-k$ and $2n-k$.
Hence, the connecting homomorphism
\[ H^{2n-1-k}(\WZ\setminus W)\longrightarrow
H^{2n-k}(\WZ,\WZ\setminus W) \]
of the cohomology long exact sequence for the pair $(\WZ,\WZ\setminus W)$
is an isomorphism.

Combining these two observations, we have
\[ H_k(W)\cong H^{2n-1-k}(\WZ\setminus W)\;\;\text{for $k=0,1,\ldots,\ell$.}\]
The same argument yields $H_k(D_Z)\cong H^{2n-1-k}(Z \setminus D_Z)$
for $k=0,1,\ldots,\ell$.
Since $\WZ\setminus W$ and $Z\setminus D_Z$ coincide,
we conclude that $H_k(W)\cong H_k(D_Z)$ for $k=0,1,\ldots,\ell$.
\end{proof}
\subsection{Weinstein neighbourhoods}
\label{sec:weineids}
Examples to which Theorem \ref{thm:homologytype} applies
can be obtained by intrinsic Weinstein surgery.
But the most prominent applications of Theorem \ref{thm:homologytype}
are Weinstein tubular neighbourhoods of closed Lagrangian
submanifolds $L\subset Z$ in a subcritical Stein manifold~$Z$.
The contact type hypersurface in question
is the unit cotangent bundle $M=ST^*L$ for some metric on~$L$,
with corresponding domain $D_Z=DT^*L$.
In this situation Theorem \ref{thm:homologytype}~(i) implies
that the homology type of any symplectic filling $W$ of $M$
is the one of $L$, provided $H_2(W,M)=0$.
Notice that for $\dim L=n\geq 3$ we have $H_2(DT^*L,ST^*L)=0$.
The following proposition says that such fillings $W$ do not
exist for~$n=2$.

\begin{prop}
\label{prop:geq3}
For $L$ a closed (possibly non-orientable) surface and
$M=ST^*L\subset Z$ as described, there is no symplectic filling $W$
of $M$ with $H_2(W,M)=0$.
\end{prop}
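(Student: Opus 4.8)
The plan is to argue by contradiction: suppose such a filling $W$ exists with $H_2(W,M)=0$, and derive an Euler-characteristic obstruction. Since $\dim L = 2$, the relevant contact manifold $M = ST^*L$ is three-dimensional and $W$ is a $4$-manifold, so I would like to invoke Theorem~\ref{thm:homologytype}~(i): because $H_2(W,M)=0$, the theorem gives $H_*(W)\cong H_*(D_Z) = H_*(DT^*L)$, and since $DT^*L$ deformation-retracts onto $L$, we get $H_*(W)\cong H_*(L)$. In particular $\chi(W)=\chi(L)$. First I would record what $\chi(L)$ is: for $L$ a closed surface, $\chi(L)\le 2$, with equality only for $S^2$, and $\chi(L)=1$ only for $\R P^2$; but by Proposition~\ref{prop:chekanov} (Chekanov's theorem, quoted in the introduction) $\pi_1(L)$ must contain an element of infinite order for $L$ to embed as a Lagrangian in a subcritical Stein manifold, which rules out $S^2$ and $\R P^2$. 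Hence $\chi(L)\le 0$.

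Next I would compute $\chi(W)$ independently using the fact that $\partial W = M = ST^*L$ is an $S^1$-bundle over $L$, together with basic $4$-manifold topology. A closed oriented $3$-manifold has zero Euler characteristic, and more generally $\chi(ST^*L)=0$ whether or not $L$ is orientable (one can see this from the Gysin sequence of the circle bundle, or from the fact that any closed odd-dimensional manifold has vanishing Euler characteristic). For a compact $4$-manifold $W$ with boundary, $\chi(W) = \chi(\partial W)/2 + \text{(something)}$ is \emph{not} a general identity, so instead I would use the doubling trick: the double $DW = W\cup_M W$ is a closed $4$-manifold, hence has even Euler characteristic, and $\chi(DW) = 2\chi(W) - \chi(M) = 2\chi(W)$. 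That gives no contradiction by itself. The real input must come from the \emph{intersection form}: for a symplectic filling $W$ with $H_2(W,M)=0$, the long exact sequence of the pair forces $H_2(M)\to H_2(W)$ to be surjective and $H_2(W)\to H_2(W,M)=0$, and combined with $H_2(W)\cong H_2(L)$ one can pin down that $b_2(W)=b_2(L)$. Since $L$ is a surface, $H_2(L)$ is $\Z$ if $L$ is orientable and $0$ if not; in the orientable case the generator of $H_2(W)$ comes from $H_2(M)=H_2(ST^*L)$, where by the Gysin sequence for the Euler class $e(T^*L)=-\chi(L)$, the group $H_2(ST^*L)$ has a torsion or free part governed by $\chi(L)$.

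The cleanest route, which I would actually pursue, avoids the intersection form and instead uses the contact geometry directly: $M = ST^*L$ is the boundary of the Weinstein neighbourhood $DT^*L\subset Z$, and on $H_*(W)\cong H_*(L)$ one checks that the connecting map in the Mayer--Vietoris sequence for $\WZ = W\cup (Z\setminus \Int DT^*L)$ — which already appeared in the proof of Theorem~\ref{thm:homologytype} — forces a specific value. Concretely, $H_2(W)\cong H^{2n-3}(\WZ\setminus W) = H^1(Z\setminus DT^*L)$ with $n=2$, and this group is computed from $Z$ and $L$ alone. Comparing with $H_2(L)$ and then running the same computation for $H_1$ reveals that consistency requires $\chi(L)=0$; but $\chi(L)=0$ forces $L=T^2$ or the Klein bottle, and for those one checks the sign/torsion bookkeeping in the Gysin sequence $\cdots\to H_0(L)\xrightarrow{\cup e} H_2(L)\to H_2(ST^*L)\to H_1(L)\to\cdots$ fails to be consistent with $H_2(W,M)=0$ after all — the Euler class $e=0$ makes $H_2(ST^*L)$ too big to map onto the supposed $H_2(W)$.

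The main obstacle, and the step I expect to need the most care, is the last one: extracting an honest contradiction rather than merely a constraint. The homological uniqueness theorem only tells us $W$ \emph{looks like} $L$; I must find an extra piece of structure — the intersection form of the $4$-manifold $W$, or the precise behaviour of the Gysin sequence for $ST^*L$ depending on the parity/torsion of $\chi(L)$ — that is incompatible with $H_2(W,M)=0$ for \emph{every} closed surface. I anticipate splitting into cases by $\chi(L)$: the case $\chi(L)<0$ (higher-genus, or non-orientable with genus $\ge 3$) should be killed immediately by the sign of the Euler class making $H_2(ST^*L)$ finite while $H_2(W)$ would need to receive a surjection inconsistent with $H_2(W,M)=0$; the borderline case $\chi(L)=0$ (torus and Klein bottle) is the delicate one and will require examining whether the class in $H_2(W)\cong\Z$ (resp.\ $H_2(W)\cong 0$ for the Klein bottle) can be represented by a closed surface of self-intersection zero coming from the boundary, which the symplectic filling condition together with $H_2(W,M)=0$ ultimately forbids.
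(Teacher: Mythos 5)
Your opening move is the right one --- Theorem~\ref{thm:homologytype}~(i) gives $H_*(W)\cong H_*(D_Z)=H_*(L)$ --- but after that the proposal never closes. The missing idea is Poincar\'e--Lefschetz duality for the $4$-manifold $W$: the hypothesis $H_2(W,M)=0$ is equivalent to $H^2(W)=0$, and by the universal coefficient theorem $H^2(W)\cong FH_2(W)\oplus TH_1(W)\cong FH_2(L)\oplus TH_1(L)$. This group is never zero for a closed surface: if $L$ is orientable then $FH_2(L)=\Z$, and if $L$ is non-orientable then $TH_1(L)=\Z_2$. That is the whole proof; no contact geometry, Euler classes, or intersection forms are needed beyond the homological uniqueness you already invoked.

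The routes you actually sketch do not get there. The Euler-characteristic computation cannot work: $\chi(W)=\chi(L)$ is consistent for every surface, and you have no independent computation of $\chi(W)$ to contradict it (also, your claim that a closed $4$-manifold has even Euler characteristic is false, e.g.\ $\CP^2$). The Gysin-sequence/intersection-form analysis is left as a case-by-case programme with the delicate cases ($\chi(L)=0$) explicitly unresolved, and the assertion that $H_2(ST^*L)$ is ``too big to map onto'' $H_2(W)$ is not an argument --- surjectivity of $H_2(M)\to H_2(W)$ is perfectly compatible with the source being larger than the target. As written, the proposal identifies the correct first step and then replaces the decisive duality argument with a list of candidate obstructions, none of which is carried to a contradiction.
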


\begin{proof}
Suppose $M$ admitted a symplectic filling $W$ with $H_2(W,M)=0$.
Then, by Theorem~\ref{thm:homologytype}, we have
\begin{eqnarray*}
0\;=\;H_2(W,M) & \cong & H^2(W)\\
               & \cong & FH_2(W)\oplus TH_1(W)\\
               & \cong & FH_2(L)\oplus TH_1(L).
\end{eqnarray*}
The condition $FH_2(L)=0$ would force $L$ to be non-orientable,
but then $TH_1(L)=\Z_2$, so this is not possible.
\end{proof}

Concerning cases (ii) and (iii) of Theorem~\ref{thm:homologytype},
the following proposition says that these
are irrelevant for the filling of unit cotangent bundles. In particular,
there are specific topological restrictions on manifolds that can be
realised as Lagrangian submanifolds in a subcritical Stein manifold.

\begin{prop}
\label{prop:chekanov}
Conditions (ii) and (iii) in Theorem \ref{thm:homologytype}
are never satisfied for $M=ST^*L$.
\end{prop}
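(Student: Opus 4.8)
The plan is to rule out each of conditions (ii) and (iii) separately by exhibiting a fixed topological obstruction coming from the fact that $L$ embeds as a Lagrangian in a subcritical Stein manifold. The unifying tool is Chekanov's theorem (to be stated as Proposition~\ref{prop:chekanov}'s companion, already referenced in the introduction via \cite{chek98}): the fundamental group of any closed Lagrangian submanifold of a subcritical Stein manifold contains an element of infinite order. In particular $\pi_1(L)$ is infinite, hence $L$ is not simply connected, and $H^1(L;\R)\neq 0$ whenever $H_1(L)$ has positive rank.

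For condition (ii), which requires $M=ST^*L$ to be simply connected: I would argue that $\pi_1(ST^*L)$ surjects onto $\pi_1(L)$ via the bundle projection $ST^*L\to L$ (the fibre $S^{n-1}$ is connected, so the projection is $\pi_1$-surjective for $n\geq 2$, and for $n=1$ the statement is vacuous or handled directly since $ST^*S^1$ is disconnected). Since $\pi_1(L)$ is infinite by Chekanov, $\pi_1(ST^*L)$ is infinite, so $ST^*L$ is never simply connected; thus (ii) fails. For condition (iii), which requires $H^1(M;\R)=0$: I would use the Gysin sequence of the sphere bundle $S^{n-1}\hookrightarrow ST^*L\to L$, or more simply the fact that $H^1(ST^*L;\R)$ contains (a copy of, or surjects onto, or receives an injection from, depending on how one sets it up) $H^1(L;\R)$. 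The cleanest route: the pullback $\pi^*\colon H^1(L;\R)\to H^1(ST^*L;\R)$ is injective (its composition with restriction to a section of $T^*L$ over the $1$-skeleton, or with the edge homomorphism of the Serre spectral sequence in low degrees, is the identity; concretely, $H^1$ of the total space of a fibre bundle with connected fibre always contains $\pi^*H^1(\text{base})$ as a direct summand when the fibre's $H^1$ is handled by the spectral sequence, but in any case the five-term exact sequence $0\to H^1(L)\to H^1(ST^*L)\to H^1(S^{n-1})$ shows injectivity). Since Chekanov gives an infinite-order element in $\pi_1(L)$, abelianising yields $\mathrm{rank}\, H_1(L)\geq 1$, hence $H^1(L;\R)\neq 0$, hence $H^1(ST^*L;\R)\neq 0$; so (iii) fails as well.

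The main obstacle is not any of the spectral-sequence bookkeeping, which is routine, but rather making sure Chekanov's theorem is invoked in exactly the form needed and attributing it correctly — in particular extracting from "contains an element of infinite order" the two consequences "$\pi_1(L)$ infinite" and "$b_1(L)\geq 1$." The second of these is immediate: an infinite-order element of $\pi_1(L)$ maps to an element of $H_1(L;\Z)$ that is either of infinite order (giving $b_1\geq 1$ directly) or of finite order, but the latter cannot happen for \emph{every} power simultaneously unless... — actually one must be slightly careful here, since an infinite-order group element can have finite-order image in the abelianisation (e.g. in a perfect group). So the honest argument for (iii) should go through $\pi_1$ differently, or invoke a stronger form of Chekanov's result. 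I expect the paper resolves this by citing the sharper statement that $\pi_1(L)$ itself, or $H_1(L)$, is infinite for Lagrangians in subcritical Stein manifolds — which is what Proposition~\ref{prop:chekanov} (the one stated just before this) presumably records — and then (iii) follows because $H_1(L)$ infinite and finitely generated forces $b_1(L)\geq 1$. I would structure the proof to cite that proposition for both parts: (ii) because infinite $\pi_1(L)$ forces infinite $\pi_1(ST^*L)$, and (iii) because infinite $H_1(L)$ forces $H^1(ST^*L;\R)\neq 0$.
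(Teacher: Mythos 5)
There is a genuine gap, and you in fact point to it yourself. Your argument takes as input a black-box ``Chekanov theorem'' asserting that $\pi_1(L)$ contains an element of infinite order (and, in the sharper form you hope for, that $H_1(L)$ is infinite or $b_1(L)\geq 1$). But the result actually proved in \cite{chek98} is an energy estimate for displacing Lagrangian submanifolds; extracting the topological consequences is precisely the content of this proposition, so your proposal assumes what has to be proved. Concretely, for condition (iii) you need $H^1(L;\R)\neq 0$, and as you correctly observe, an infinite-order element of $\pi_1(L)$ does not by itself give $b_1(L)\geq 1$ (it can die in the abelianisation). You flag this and defer to ``a stronger form of Chekanov's result'' without establishing or precisely citing it, so the proof of (iii) — and in truth also the justification of the infinite-order element used for (ii) — is left open.

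The paper fills this gap as follows: since $Z$ is subcritical Stein, Cieliebak's splitting theorem shows the completion splits off a $\C$-factor, so $L$ is displaceable; exactness of the symplectic form rules out holomorphic spheres; hence Chekanov's energy bound forces the existence of a \emph{non-constant holomorphic disc} $\Delta$ with $\partial\Delta\subset L$. Then, with $\lambda$ a Liouville primitive of $\omega_Z$, the closed form $\lambda|_{TL}$ satisfies $\int_{\partial\Delta}\lambda=\int_\Delta\omega_Z>0$ by Stokes, which simultaneously shows $H^1(L;\R)\neq 0$ (so (iii) fails, after the routine observation, which you do supply, that $\pi^*\colon H^1(L;\R)\to H^1(ST^*L;\R)$ is injective) and that $[\partial\Delta]$ has infinite order in $H_1(L;\Z)$, hence in $\pi_1(L)$ and in $\pi_1(M)$ (so (ii) fails). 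Your reductions from $L$ to $M=ST^*L$ via the sphere-bundle projection are fine and essentially match what the paper leaves implicit; what is missing is the holomorphic-disc-plus-Stokes argument that produces the cohomological obstruction in the first place.
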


\begin{proof}
By a result of Chekanov \cite{chek98}
there exists a non-constant holomorphic disc $\Delta$
in $(Z,\omega_Z)$ with boundary $\partial\Delta$ on $L$. The reason is the
following. The Lagrangian submanifold $L$ is 
displaceable in the completion $\hat{Z}$ of $Z$, since
$\hat{Z}$ symplectically splits off a $\C$-factor by Cieliebak's
splitting theorem, see \cite[Section~14.4]{ciel12}.
There are no non-constant holomorphic spheres in~$\hat{Z}$ by exactness of
the Stein symplectic form. If there were no non-constant holomorphic disc
in $(Z,\omega_Z)$ with boundary on $L$, the energy bound
on non-displacing symplectomorphisms 
in the main theorem of \cite{chek98} would be infinite,
i.e.\ $L$ would not be displaceable under any symplectomorphism.

Let $\lambda$ be a primitive $1$-form for the symplectic form~$\omega_Z$.
Then the $1$-form $\lambda_L:=\lambda|_{TL}$ is closed
($L$ being Lagrangian), and it represents a non-trivial class in
$H^1(L;\R)$, since it integrates non-trivially over $\partial\Delta$
by the theorem of Stokes. Thus, condition (iii) is violated.

Further, the circle $\partial\Delta\subset L$ represents an element of
infinite order in $\pi_1(L)$, and this lifts to an element of
infinite order in~$\pi_1(M)$, which violates condition~(ii).
\end{proof}
\subsection{The split situation}
\label{subsection:split}
The isomorphism between the homology of $D_Z$ and that of $W$
in Theorem~\ref{thm:homologytype} is in some sense
natural on a formal algebraic level, but it is not, in general,
induced by a map between these two manifolds. In this and the
following section we describe a situation where it is.

Let $Q$ be a closed, connected $(n-1)$-dimensional manifold.
The manifold $L=Q\times S^1$ embeds as a Lagrangian submanifold
into $T^*Q\times\C=:(Z,\omega_Z)$, by first embedding $L$ as the zero
section into $T^*L=T^*Q\times T^*S^1$, followed by the embedding of
the factor $T^*S^1$ into $\C$ as a neighbourhood of the
unit circles $S^1\subset\C$. In $T^*L\subset Z$ we consider the
contact type hypersurface $M=ST^*L=\partial (DT^*L)$, and we form the
symplectic manifold $(\WZ,\Omega_Z)$ as in (\ref{eqn:WZ}) by replacing
$DT^*L=:D_Z$ with a given symplectic filling $(W,\omega)$ of~$M$.
This is illustrated in Figure~\ref{figure:split1}, where the lightly
shaded solid torus represents $DT^*L$, which is being replaced by~$W$.
Beware that the latter no longer has a rotational symmetry, in general,
hence the note of warning in Figure~\ref{figure:split1}.

\begin{figure}[h]
\labellist
\small\hair 2pt
\pinlabel $W$ at 66 196
\pinlabel $W_0$ at 144 166
\pinlabel $\text{Hic sunt leones}$ [bl] at 345 67
\pinlabel $\C$ [t] at 500 140
\pinlabel $T^*Q$ [r] at 248 287
\endlabellist
\centering
\includegraphics[scale=0.6]{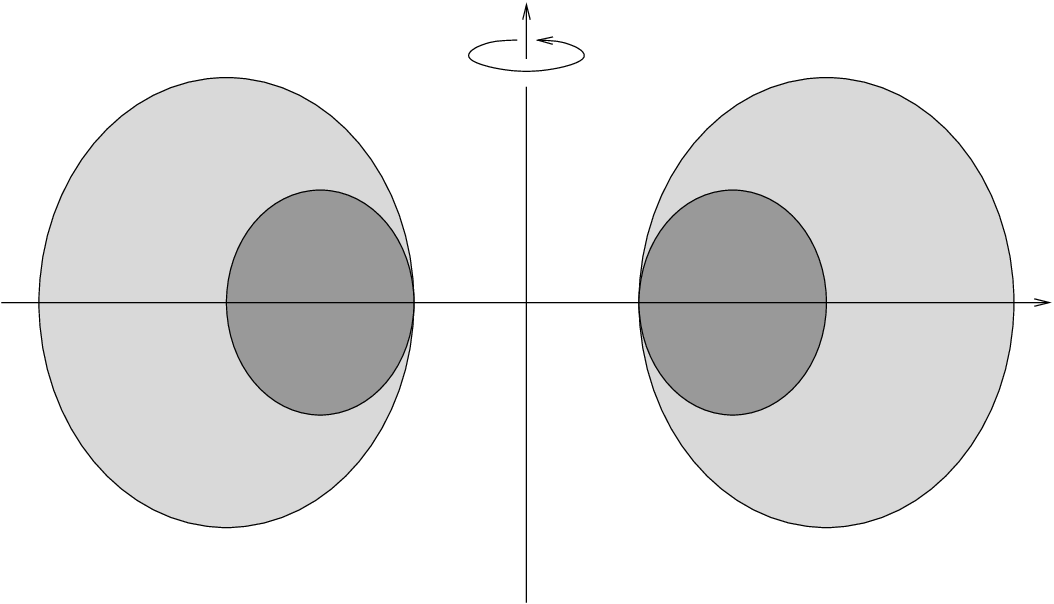}
  \caption{The setting of Theorem~\ref{thm:inducedmap}.}
  \label{figure:split1}
\end{figure}

Thanks to the splitting $L=Q\times S^1$, we have global
sections of $T^*L$ of the form $Q\times S^1\times\{t\}$,
where $Q$ is identified with the zero section in
$T^*Q$, and $S^1\times\{t\}\subset S^1\times\R=T^*S^1$.
This allows us to find inside $DT^*L$ a smaller copy $W_0$ of itself
which intersects $ST^*L$ in a copy of $Q\times S^1$. This copy
$W_0$ may be assumed to sit inside a thin tubular neighbourhood
of $ST^*L\subset DT^*L$, and hence can also be found inside
a collar of the manifold $W$ replacing $DT^*L$,
as indicated in Figure~\ref{figure:split1}.
\subsection{A homology equivalence in the split situation}
\label{subsection:homeq}
For better reference, we formulate the main hypothesis of the
following theorem separately. We say that \emph{hypothesis {\rm (H)}
is satisfied} if one of the following conditions holds, where
we recall that $M=ST^*(Q\times S^1)$.

\begin{itemize}
\item[(H-i)] $H_2(W,M)=0$ and $\chi(Q)=0$;
\item[(H-ii)] $Q=S^1\times N$ with $N$ a manifold of
dimension at least~$1$, and $(W,\omega)$ is symplectically aspherical.
\end{itemize}

\begin{rem}
Under condition (H-i), $\dim Q\geq 2$ follows from Proposition~\ref{prop:geq3}.
\end{rem}

\begin{thm}
\label{thm:inducedmap}
In the situation as just described, and under the assumption that hypothesis 
{\rm (H)} is satisfied,
the embedding $W_0\rightarrow W$ induces isomorphisms on homology.
\end{thm}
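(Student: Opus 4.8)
The plan is to exploit the fact that $W_0$ is, up to diffeomorphism, just another copy of $DT^*(Q\times S^1)$, and to compare $H_*(W_0)$ and $H_*(W)$ via the homology type already pinned down. First I would invoke Theorem~\ref{thm:homologytype}: under hypothesis (H), the filling $(W,\omega)$ satisfies one of conditions (i)--(iii) of that theorem (using that $H_2(W,M)=0$ gives (i) directly, while in the (H-ii) case the extra $S^1$-factor forces $H^1(M;\R)\ne 0$, so one routes through (i) by noting that symplectic asphericity plus the exactness coming from the Lagrangian neighbourhood yields $H_2(W,M)=0$ — this bookkeeping is to be checked), whence $H_*(W)\cong H_*(DT^*L)=H_*(L)$. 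Since $W_0$ is diffeomorphic to $DT^*L$, likewise $H_*(W_0)\cong H_*(L)$. So both sides have the same abstract homology, and the real content is that the \emph{embedding} $W_0\hookrightarrow W$ realises this.

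The strategy for the induced map is a Mayer--Vietoris comparison inside $\WZ$. Write $W = W_0 \cup (\overline{W\setminus W_0})$, where $W_0\cap(\overline{W\setminus W_0})$ is a copy of $Q\times S^1 = L$ (the common boundary piece $W_0$ meets $ST^*L$ in). The key geometric input from Section~\ref{subsection:split} is that $W_0$ sits in a collar of $\partial W = M$, so $\overline{W\setminus W_0}$ is diffeomorphic to a copy of $W$ itself with a collar removed, i.e.\ again to $W$; more importantly, the pair $(W, W_0)$ is modelled on the pair $(DT^*L, DT^*L)$ with the smaller copy sitting near the boundary, and in that model the inclusion $W_0\hookrightarrow W$ is isotopic to the identity. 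I would make this precise by observing that the region between $ST^*L=\partial(DT^*L)$ and $\partial W_0$ is a product $L\times[0,1]$ (it is the region swept by the shrinking global sections $Q\times S^1\times\{t\}$), so $DT^*L = W_0 \cup_{L\times\{0\}}(L\times[0,1])$ deformation retracts onto $W_0$, and the inclusion is a homotopy equivalence. The same product-collar picture persists when $DT^*L$ is replaced by $W$ inside $\WZ$: the collar of $M$ in $W$ contains this $L\times[0,1]$, and $W_0$ is its inner end, so $W$ deformation retracts onto $W_0$ along the collar direction, making $W_0\hookrightarrow W$ a homotopy equivalence and, in particular, a homology isomorphism.

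Concretely, the steps are: (1) record that hypothesis (H) implies the hypotheses of Theorem~\ref{thm:homologytype}, so $H_*(W)\cong H_*(L)$ — with the (H-ii) reduction to case (i) spelled out; (2) identify the closed region between $\partial W_0$ and $M$ with $L\times[0,1]$, using the global sections of $T^*L$ available because $L$ splits off an $S^1$ (equivalently, because $\chi(Q)=0$, so $ST^*L\to L$ has a section, giving the product structure); (3) conclude $W = W_0 \cup_L (L\times[0,1])$ deformation retracts onto $W_0$, hence $W_0\hookrightarrow W$ induces isomorphisms on all $H_k$; (4) optionally cross-check via Mayer--Vietoris that the connecting maps vanish, which follows from the product structure of the glued-in piece. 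The main obstacle I anticipate is step (2): verifying that the shrinking copies $W_0$ can genuinely be arranged so that the complementary region in $DT^*L$ is a straight product $L\times[0,1]$ (rather than merely homotopy equivalent to one), and then that this product neighbourhood survives intact inside the collar of $M$ in the replaced filling $W$ — this is exactly the geometric content already asserted in Section~\ref{subsection:split} ("$W_0$ may be assumed to sit inside a thin tubular neighbourhood of $ST^*L$, and hence can also be found inside a collar of $W$"), so the work is to turn that sentence into a clean deformation retraction. Everything else is formal.
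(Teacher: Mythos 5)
Your argument has a fatal gap at its core: the claim that $W$ deformation retracts onto $W_0$ ``along the collar direction''. What is true (and soft) is that \emph{inside the standard filling} $DT^*L$ the complement of $\Int(W_0)$ is a product $ST^*L\times[0,1]$, so $DT^*L$ retracts onto $W_0$. But the theorem concerns $W_0$ sitting inside an \emph{arbitrary} filling $W$. There, $W_0$ lies in a collar $M\times[0,1]$ of $\partial W=M$, and the complement $X=W\setminus\Int(W_0)$ contains everything of $W$ outside that collar --- i.e.\ essentially all of the unknown topology of $W$. A manifold with boundary retracts onto the complement of a collar, not onto the collar, so nothing forces $W$ to retract onto $W_0$; indeed, $X$ is exactly the cobordism $\{M_0,X,M\}$ of Section~\ref{subsection:cobordism}, and showing it is (homologically, then homotopically) a product is the content of the rest of the paper, requiring holomorphic curves. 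Your step (3) assumes the conclusion. Relatedly, your step (1) reduction of (H-ii) to condition (i) of Theorem~\ref{thm:homologytype} does not work: symplectic asphericity does not give $H_2(W,M)=0$, and conditions (ii), (iii) are never available for $M=ST^*L$ by Proposition~\ref{prop:chekanov}; the paper handles (H-ii) by a separate bubbling analysis (Lemma~\ref{lem:bubbling-ii}), not by Theorem~\ref{thm:homologytype}. Finally, even the abstract isomorphism $H_*(W)\cong H_*(L)$ from Theorem~\ref{thm:homologytype} cannot upgrade a particular map to an isomorphism.

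The actual proof is not soft. One caps off $Q\times S^1\subset M$ with $Q\times D^2$ in $T^*Q\times\C$ to form thickenings $W_0'\subset W'\subset W_1'$, uses the section of $T^*Q$ (this is where $\chi(Q)=0$ enters) to isotope $W_0'$ into the position where the degree-one evaluation map on a moduli space of holomorphic spheres from \cite{bgz} applies --- hypothesis (H) being needed precisely to exclude bubbling --- and concludes that $W_0'\rightarrow W'$ is a homology equivalence. The statement for $W_0\rightarrow W$ then follows by comparing the Mayer--Vietoris sequences of $W_0'=W_0\cup_{T^*Q\times S^1}T^*Q\times D^2$ and $W'=W\cup_{T^*Q\times S^1}T^*Q\times D^2$ via the five-lemma. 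You would need to supply some version of this holomorphic-curve input; no collar or retraction argument can substitute for it.
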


\begin{proof}
Up to homotopy, it may be assumed that $W_0$ touches the boundary
of $W$ from the inside along a tubular neighbourhood of $Q\times S^1
\subset ST^*L$. This $Q\times S^1$ bounds $Q\times D^2$ in $Q\times\C$.
By gluing in a smaller and a larger thickening of this $Q\times D^2$
inside $T^*Q\times\C$ we obtain the manifolds $W_0'$ and $W'$ as
shown in Figure~\ref{figure:thickening1}. The manifold
$W_1'$ is obtained as a further thickening of $W'$ in the
radial direction of $\C$ and the fibres of~$T^*Q$.

\begin{figure}[h]
\labellist
\small\hair 2pt
\pinlabel $W'$ at 68 196
\pinlabel $W_0'$ at 146 166
\pinlabel $W_1'$ at 217 205
\pinlabel $\C$ [t] at 517 142
\pinlabel $T^*Q$ [r] at 250 287
\pinlabel ${Q\times D^2}$ [t] at 318 13
\endlabellist
\centering
\includegraphics[scale=0.4]{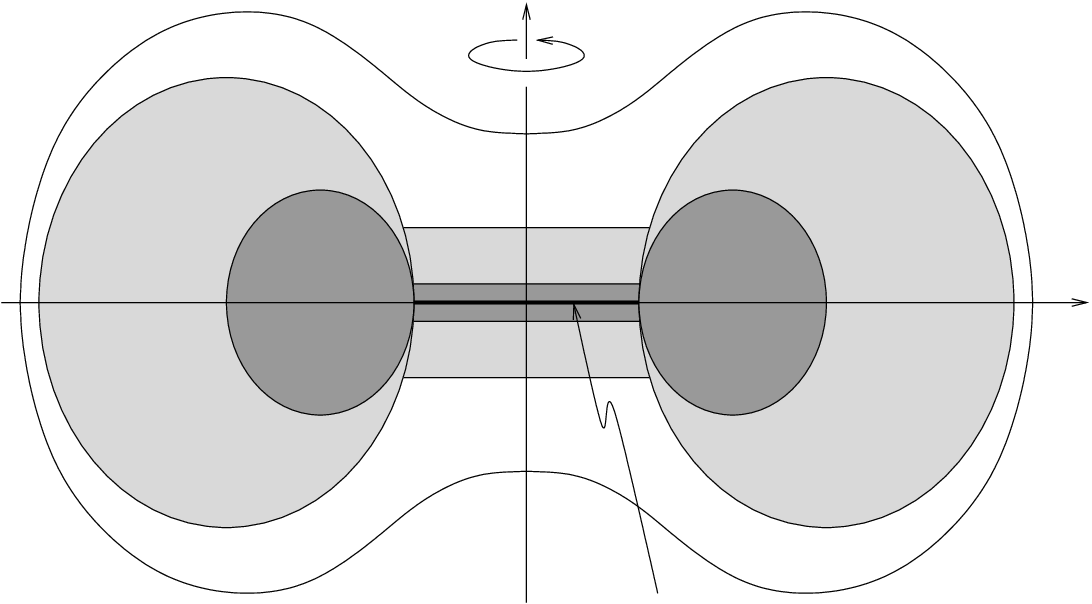}
  \caption{Proof of Theorem~\ref{thm:inducedmap} --- the thickenings.}
  \label{figure:thickening1}
\end{figure}

Thanks to $\chi(Q)=0$ (or even $Q=S^1\times N$)
we have a nowhere vanishing
section of $T^*Q$, which allows us to isotope $W_0'$ inside
$W_1'$ to a position as shown in Figure~\ref{figure:thickening2},
by first shrinking it in the $\C$-direction such that it becomes
positioned inside the `neck' formed by $DT^*(Q\times S^1)$
in $T^*Q\times\C$.

\begin{figure}[h]
\labellist
\small\hair 2pt
\pinlabel $W'$ at 100 187
\pinlabel $W_0'$ at 535 164
\pinlabel $\C$ [t] at 790 142
\pinlabel $T^*Q$ [r] at 241 283
\pinlabel $W_1'$ at 520 241
\endlabellist
\centering
\includegraphics[scale=0.38]{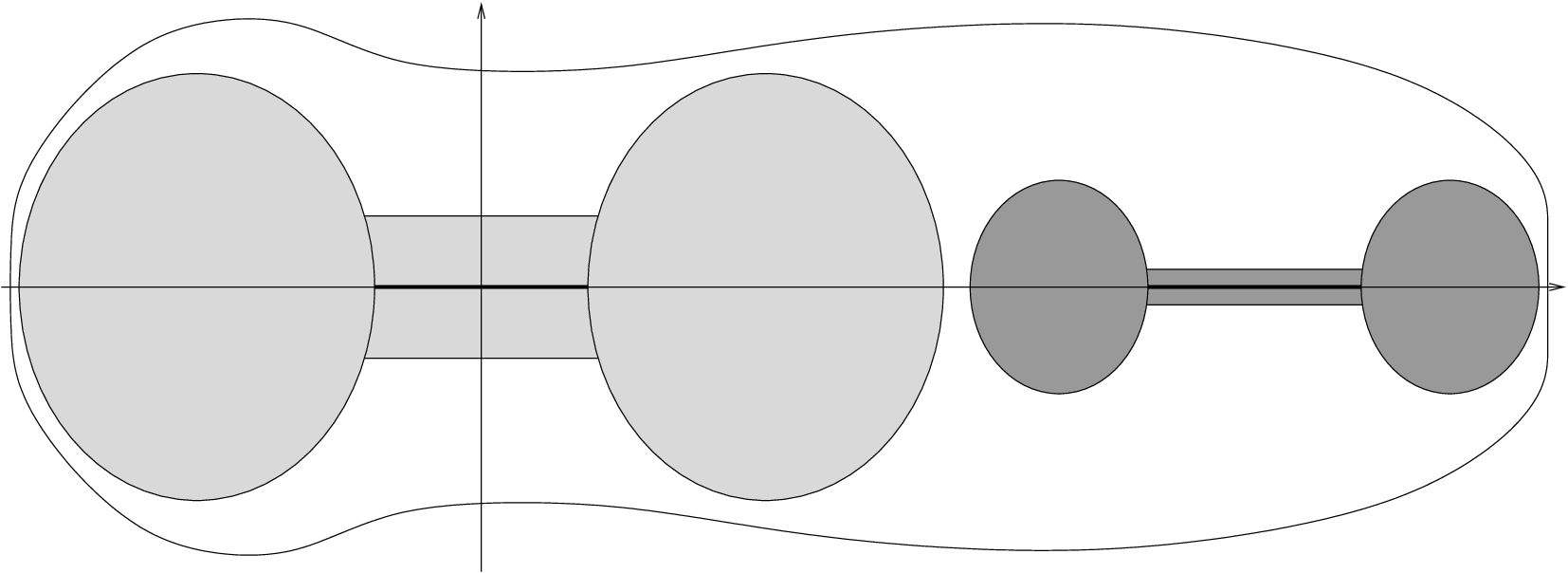}
  \caption{Proof of Theorem~\ref{thm:inducedmap} --- after the isotopy.}
  \label{figure:thickening2}
\end{figure}

Then situation in Figure~\ref{figure:thickening2} is the one studied
in~\cite[Theorem~1.2 and Section~5]{bgz}, and the results there
say that the inclusion $W_0'\rightarrow W_1'$ is a homology
equivalence. The same is true before the isotopy of
$W_0'$ inside $W_1'$; thus, the inclusion $W_0'\rightarrow W'$
in Figure~\ref{figure:thickening1} is likewise a homology equivalence.

\begin{rem}
Hypothesis (H) is required to guarantee that in
the relevant moduli space of holomorphic spheres there is no
bubbling off. For further details see Remark~\ref{rem:nobubbles} below. 
\end{rem}

Now, in Figure~\ref{figure:thickening1}, consider the splittings
(up to homotopy)
\[ W_0'=W_0\cup_{T^*Q\times S^1}T^*Q\times D^2\]
and
\[ W'=W\cup_{T^*Q\times S^1}T^*Q\times D^2.\]
This leads to the commutative diagram
\begin{diagram}
H_{k+1}(W_0') & \rTo & H_k(T^*Q\times S^1) & \rTo
 & H_k(W_0)\oplus H_k(T^*Q\times D^2) & \rTo & H_k(W_0')\\
\dTo_{\cong}  &      & \dTo_{\id}             &      
 & \dTo_{i_*\oplus\id}                &      & \dTo_{\cong}\\
H_{k+1}(W')   & \rTo & H_k(T^*Q\times S^1) & \rTo 
 & H_k(W)\oplus H_k(T^*Q\times D^2)   & \rTo & H_k(W')
\end{diagram}
of Mayer--Vietoris sequences (continuing horizontally on either side).
By the five-lemma, the homomorphism $i_*\co H_k(W_0)\rightarrow H_k(W)$
induced by inclusion is an isomorphism for all $k\in\Z$.
\end{proof}
\subsection{A cobordism}
\label{subsection:cobordism}
Still under the assumptions of Theorem~\ref{thm:inducedmap}, and after
pushing $W_0$ a little into the interior of~$W$,
we set $X:=W\setminus\Int(W_0)$. This
defines a cobordism $\{M_0,X,M\}$ between two copies $M_0\cong M$ of $ST^*L$.

\begin{lem}
\label{lem:cobordism}
The boundary inclusions $M_0\rightarrow X$ and $M\rightarrow X$
induce isomorphisms on homology.
\end{lem}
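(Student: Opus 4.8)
The plan is to deduce Lemma~\ref{lem:cobordism} from the homology equivalences already established, using the long exact sequences of the pairs $(W,W_0)$, $(W,X)$ and $(X,M_0)$, $(X,M)$ together with excision and Poincar\'e--Lefschetz duality for the cobordism $X$. First I would record that Theorem~\ref{thm:inducedmap} says the inclusion $W_0\hookrightarrow W$ is a homology isomorphism, so by the long exact sequence of the pair we get $H_*(W,W_0)=0$. Since $X=W\setminus\Int(W_0)$ and $W_0$ is collared, excision gives $H_*(W,W_0)\cong H_*(X,M_0)$, where $M_0=\partial W_0$ is the copy of $ST^*L$ at which $X$ abuts $W_0$. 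Hence $H_*(X,M_0)=0$, which by the long exact sequence of $(X,M_0)$ is precisely the statement that the inclusion $M_0\hookrightarrow X$ is a homology isomorphism.

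Next I would handle the other boundary inclusion $M\hookrightarrow X$ by Poincar\'e--Lefschetz duality. The cobordism $X$ is a compact $2n$-manifold with $\partial X=M_0\sqcup M$, and duality gives $H_k(X,M)\cong H^{2n-k}(X,M_0)$. From the vanishing $H_*(X,M_0)=0$ just obtained, together with the universal coefficient theorem, we get $H^*(X,M_0)=0$, hence $H_k(X,M)=0$ for all $k$. Feeding this into the long exact sequence of the pair $(X,M)$ shows that $M\hookrightarrow X$ induces isomorphisms on homology as well. (One should take integral coefficients throughout and invoke the universal coefficient exact sequence $H^i=FH_i\oplus TH_{i-1}$, as is done elsewhere in the paper, to pass between homology and cohomology vanishing.)

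The only point requiring a little care — and the step I expect to be the main obstacle — is the bookkeeping needed to identify the two ends of the cobordism correctly and to make the excision step rigorous: one must be sure that pushing $W_0$ slightly into the interior of $W$ produces a genuine collar, so that $(W,W_0)$ and $(X,M_0)$ have the same relative homology, and that the copy of $ST^*L$ called $M_0$ is indeed $\partial W_0$ while $M=\partial W$ is the other boundary component of $X$. Once the collar structure is in place this is routine, but it is worth stating explicitly since the rotational-symmetry warning of Figure~\ref{figure:split1} means $W_0$ is only isotopic, not literally equal, to a standard disc bundle sitting in a collar of~$W$. With that understood, the five-lemma is not even needed here; the two long exact sequences plus duality suffice.

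\begin{proof}[Proof of Lemma~\ref{lem:cobordism}]
By Theorem~\ref{thm:inducedmap}, the inclusion $W_0\rightarrow W$ induces
isomorphisms on homology, so the long exact sequence of the pair $(W,W_0)$
gives $H_*(W,W_0)=0$. After pushing $W_0$ into the interior of $W$ it sits
inside a collar, so excision yields $H_*(W,W_0)\cong H_*(X,M_0)$, where
$M_0=\partial W_0$. Hence $H_*(X,M_0)=0$, and the long exact sequence of the
pair $(X,M_0)$ shows that the inclusion $M_0\rightarrow X$ is a homology
equivalence.

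For the inclusion $M\rightarrow X$ we use Poincar\'e--Lefschetz duality for
the compact $2n$-manifold $X$ with boundary $\partial X=M_0\sqcup M$. By the
universal coefficient theorem, $H_*(X,M_0)=0$ implies $H^*(X,M_0)=0$, and
duality gives $H_k(X,M)\cong H^{2n-k}(X,M_0)=0$ for all $k\in\Z$. The long
exact sequence of the pair $(X,M)$ then shows that $M\rightarrow X$ induces
isomorphisms on homology.
\end{proof}
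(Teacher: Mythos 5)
Your proof is correct and follows essentially the same route as the paper: excision to identify $H_*(X,M_0)$ with $H_*(W,W_0)=0$ via Theorem~\ref{thm:inducedmap}, then Poincar\'e--Lefschetz duality and the universal coefficient theorem to kill $H_*(X,M)$. The only cosmetic difference is that you apply the universal coefficient theorem before duality rather than after, which changes nothing.
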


\begin{proof}
The excision isomorphism gives $H_k(X,M_0)\cong H_k(W,W_0)$, and the latter
relative homology groups vanish for all $k$ by Theorem~\ref{thm:inducedmap}.
The relative cohomology groups $H^k(X,M)$ then vanish by Poincar\'e duality,
and hence so do the relative homology groups $H_k(X,M)$ by the
universal coefficient theorem.
\end{proof}

The proof of \cite[Lemma 5.1]{bgz}
shows that if the inclusion $M\rightarrow W$ is $\pi_1$-isomorphic,
then so are the inclusions $M_0,M\rightarrow X$.
The bundle projection $ST^*L\rightarrow L$
is $\pi_1$-isomorphic by the homotopy long exact sequence of the
fibration.
Moreover, since this bundle has a section by our assumption that
$L$ equals $Q\times S^1$, we have an inclusion $L\rightarrow W$ that factors
through $ST^*L=M=\partial W$. Hence, the $\pi_1$-isomorphicity of
the inclusion $M\rightarrow W$
reduces to the inclusion $L\rightarrow W$ being $\pi_1$-isomorphic.
\section{Holomorphic curves analysis}
We continue with the setup as described
in Section~\ref{subsection:split} and illustrated 
in Figure~\ref{figure:split1}.
\subsection{The fundamental group}
\label{subsection:pi1}
As explained in Remark~\ref{rem:OV}, the
fundamental group of a symplectically aspherical filling $(W,\omega)$
of a given contact manifold $(M,\xi)$ cannot, in general,
be determined without some specific assumptions on $(M,\xi)$.
The following theorem deals with the situation at hand,
i.e.\ the case $(M,\xi)=ST^*L$, where $L=Q\times S^1$.

\begin{thm}
\label{thm:pi1surjective}
Let $Q$ be a closed, connected $(n-1)$-dimensional manifold,
$n\geq 3$. Set $L=Q\times S^1$.
Let $(W,\omega)$ be a symplectic filling of $(M,\xi)=ST^*L$
satisfying hypothesis {\rm (H)}.
Then the inclusion $M\rightarrow W$ induces a surjection of fundamental groups.
\end{thm}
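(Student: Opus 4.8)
The plan is to use a filling-by-holomorphic-curves argument adapted to the non-compact target $(Z,\omega_Z) = T^*Q\times\C$, in the spirit of \cite{bgz} but with holomorphic annuli rather than discs, together with the \emph{moving complex hypersurface argument} alluded to in the introduction. First I would form the filled-in manifold $(\WZ,\Omega_Z)$ as in \eqref{eqn:WZ}, replacing $DT^*L = D_Z$ by $W$; under hypothesis (H) this is a symplectically aspherical filling of the subcritical Stein manifold $(M_Z,\xi_Z) = \partial(T^*Q\times\C)$ (restricted to a large sublevel set). The key geometric input is the family of complex hypersurfaces $V_t := T^*Q\times\{t\}\subset Z$ for $t$ in the real line, or rather a suitable compact arc of such slices; each $V_t$ meets $L = Q\times S^1$ in the circle $Q$-fibrewise is trivial, so $L\cap V_t$ is a copy of $Q\times\{pt\}$. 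The complement $\C\setminus\{t\}$ retracts to a circle, and the idea is to consider, for each $t$, the moduli space of $\Omega_Z$-holomorphic annuli (or half-cylinders) in $\WZ$ with one boundary on $L$-side data and controlled behaviour near $V_t$, so that the family as $t$ varies sweeps out enough of $W$ to detect $\pi_1$.

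The main steps, in order, would be: (1) fix a compatible almost complex structure $J$ on $\WZ$ that agrees with the split integrable structure on the cylindrical end $T^*Q\times\C$ and makes each $V_t$ a $J$-holomorphic hypersurface; (2) set up the relevant moduli space of $J$-holomorphic annuli with Lagrangian boundary on $L$ (equivalently, after doubling, holomorphic spheres meeting the $V_t$'s appropriately), using the standard index/Fredholm package and automatic transversality or a generic perturbation supported away from the hypersurfaces; (3) prove compactness — here hypothesis (H) ($\chi(Q)=0$, giving a nowhere-zero section of $T^*Q$, or the product form $Q = S^1\times N$, together with symplectic asphericity) is exactly what rules out sphere bubbling and disc bubbling in the limit, as in Remark~\ref{rem:nobubbles}; (4) show the evaluation map from this moduli space, as $t$ ranges over its parameter interval, has image filling $W$ up to the $s$-cobordism-irrelevant part, and that each annulus, having boundary a loop in $L\subset M$, exhibits every loop in $W$ as homotopic (rel nothing) to a loop in $M$; (5) conclude $\pi_1(M)\to\pi_1(W)$ is onto.

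Concretely, the $\pi_1$-surjectivity itself comes out as follows: given $\gamma$ a loop in $W$, one finds a holomorphic annulus $u$ in the moduli space with $u$ passing through (a point near) $\gamma$ and with $\partial u\subset$ a neighbourhood of $Q\times S^1\subset M$; the annulus provides a homotopy in $W$ from a loop representing $u_*[\text{core}]$ to $\partial u$, and connectedness of the moduli space (which one gets from the filling property and the degree argument of \cite{bgz}) lets one slide $\gamma$ across the family to land in $M$. The existence of \emph{some} non-constant holomorphic curve with boundary on $L$ is guaranteed by Chekanov's displaceability argument, Proposition~\ref{prop:chekanov}, which is what makes the moduli space non-empty to begin with; the circle $\partial\Delta$ of Proposition~\ref{prop:chekanov} is the $S^1$-factor direction, so the annuli are genuinely caught by the $S^1$ in $L = Q\times S^1$.

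The hard part will be step (3)–(4): controlling the compactness of the family of annuli as the hypersurface $V_t$ moves — one must ensure no breaking or bubbling occurs uniformly in $t$, and that the limit configurations at the ends of the $t$-interval still carry the topological information (the moving hypersurface must not ``push'' curves out to infinity in the $T^*Q$-direction, which is where the section coming from $\chi(Q)=0$ is used), and then to extract from the parametrised moduli space a degree statement strong enough to force $\pi_1$-surjectivity rather than just surjectivity on $\pi_1$ up to normal closure (cf. Remark~\ref{rem:OV}(3)). This is precisely the refinement of the \cite{bgz} technique that the paper advertises, so I would expect the bulk of the work — and the genuinely new estimates — to live in establishing that the annuli moduli space is a well-behaved, connected, compact manifold of the expected dimension, with an evaluation map whose restriction to the $L$-boundary loops surjects $\pi_1(W)$.
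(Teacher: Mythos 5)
Your proposal captures the general flavour of the argument (holomorphic curves in $T^*Q\times\C$ constrained by complex hypersurfaces, an evaluation map of degree one, hypothesis (H) entering through compactness), but two of its central steps diverge from what actually works, and the decisive step is missing. First, the curves in the paper's proof are not annuli with Lagrangian boundary on $L$: one partially compactifies to $\hat{Z}=W\cup\bigl((T^*Q\times\CP^1)\setminus\Int(DT^*L)\bigr)$ and studies \emph{closed} holomorphic spheres in the class $[v\times\CP^1]$ subject to a three-point condition on the hyperplanes $H_0,H_\varrho,H_\infty$; the ``annuli'' of the introduction are these spheres with the two points over $H_0$ and $H_\infty$ deleted. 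Setting up genuine annuli with boundary on the Lagrangian $L$ would confront you with disc bubbling at the boundary, which hypothesis (H) does not control --- indeed Proposition~\ref{prop:chekanov} guarantees that non-constant holomorphic discs with boundary on $L$ \emph{do} exist, so they cannot be excluded. Your appeal to Chekanov to make the moduli space non-empty is also misplaced: non-emptiness and the degree computation come from the standard spheres $z\mapsto(v_0,z)$ with $v_0$ far from the zero section, via the maximum principle. The moving complex hypersurface argument is not the engine of the proof either; it is used only in Lemma~\ref{lem:bubbling-ii} to exclude a specific two-component stable map under hypothesis (H-ii).

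Second, and more seriously, your step (4)--(5) --- ``slide $\gamma$ across the family to land in $M$'' --- is exactly the point where surjectivity onto a normal closure threatens to be all one gets, and you do not say how to do better. The paper's mechanism is: the restricted evaluation map $\ev\co\MM\times\C^*\rightarrow Z^*$ is proper of degree one, hence $\pi_1$-surjective; by positivity of intersection each sphere meets $H_0$ transversely in a single point, so $\ev^{-1}(T^*Q\times D_\varepsilon)$ is a product tubular neighbourhood of $\MM\times\{0\}$ and the inclusion $\ev^{-1}(T^*Q\times\partial D_\varepsilon)\rightarrow\MM\times\C^*$ is a homotopy equivalence. The resulting commutative square then forces the inclusion $T^*Q\times\partial D_\varepsilon\rightarrow Z^*$ to be $\pi_1$-surjective, and up to homotopy this is the inclusion of the section $Q\times S^1\subset M$ into $W\simeq Z^*$, which factors through $M\rightarrow W$. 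Without this factorisation through a homotopy equivalence on the moduli-space side, the curve-by-curve homotopies you describe only show that $\pi_1(W)$ is generated by conjugates of loops coming from $M$, which, as Remark~\ref{rem:OV}~(3) warns, is strictly weaker than $\pi_1$-surjectivity.
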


\begin{rem}
\label{rem:pi1-abelian}
(1) For this and other statements concerned only with $\pi_1$,
one may drop the condition $\chi(Q)=0$ from hypothesis (H-i).
For all statements about higher homology groups, it will be
required.

(2) If $\pi_1(Q)$ (and hence $\pi_1(M)$) is abelian, then under the assumptions
of the theorem the inclusion $M\rightarrow W$ is even
$\pi_1$-isomorphic, since $\pi_1$-surjectivity guarantees
that $\pi_1(W)$ is likewise abelian, so the fundamental group of $M$
and $W$ coincides with the respective first homology group.
On $H_1$, the inclusion $M\rightarrow W$ is an isomorphism
by Theorem~\ref{thm:inducedmap}, since for $n\geq 3$ the first homology
group of $ST^*L$ coincides with that of~$L\simeq DT^*L=W_0$.
\end{rem}

\begin{proof}[Proof of Theorem~\ref{thm:pi1surjective}]
Equip $T^*Q$ with a compatible almost complex structure
that is cylindrical in the complement of $\Int(DT^*Q)$.
Other choices are possible; all that matters to us
is that we have an almost complex structure for which
the maximum principle holds in fibre direction.
As before, we embed $DT^*L$ into $T^*Q\times\C$, and we compactify
the $\C$-factor to $\CP^1=\C\cup\{\infty\}$ with the
Fubini--Study symplectic form and its natural complex structure.

Define a symplectic manifold
\begin{equation}
\label{eqn:hatZ}
(\hat{Z},\hat{\Omega}):=
(W,\omega)\cup_{(M,\xi)}
\bigl((T^*Q\times\CP^1)\setminus\Int(DT^*L)\bigr).
\end{equation}
Compared with the definition of $\WZ$ in~(\ref{eqn:WZ}),
this amounts to a partial compactification by
the hyperplane $H_{\infty}:=T^*Q\times\{\infty\}$.
Choose a compatible almost complex structure on $(\hat{Z},\hat{\Omega})$
that is generic on $\Int(W)$, and the product almost complex structure
coming from $T^*Q\times\CP^1$ on the complement of $\Int(W)$.

Consider the moduli space $\MM$ of holomorphic spheres
$u\co\CP^1\rightarrow\hat{Z}$ subject to the following conditions,
where we fix a real number $\varrho\gg1$:
\begin{itemize}
\setlength\itemsep{.3em}
\item[(M1)] $[u]=[v\times\CP^1]$ in $H_2(\hat{Z})$
for some $v\in T^*Q\setminus DT^*Q$;
\item[(M2)] $u(z)\in H_z:=T^*Q\times\{z\}$ for $z\in\{0,\varrho,\infty\}
\subset\CP^1$.
\end{itemize}
This places us in the situation
of \cite[Section~2]{bgz}, except for an inessential difference
in the choice of the hyperplanes in the $3$-point condition~(M2).

\begin{rem}
\label{rem:nobubbles}
The moduli space $\MM$ is an oriented $(2n-2)$-dimensional manifold,
cf.\ \cite{geze12,mcsa12} and \cite[Section 2.2]{bgz}.
If $H_2(W,M)=0$, i.e.\ under condition~(H-i), the manifold
$(\WZ,\Omega_Z)$ is symplectically aspherical
thanks to \cite[Lemma 3.4]{bgz}, where it is shown that the
gluing of a symplectically aspherical manifold with
vanishing relative $H_2$ (here: $W$) and an exact symplectic
manifold (here: $(T^*Q\times\C)\setminus\Int(DT^*L)$) is
symplectically aspherical. Then one argues exactly as in
\cite[Proposition~2.3]{bgz} to see that
there is no bubbling off.
In Lemma \ref{lem:bubbling-ii} below
we shall give an argument that excludes bubbling off
under assumption (H-ii).
\end{rem}

Therefore, the evaluation map
\[ \ev\co\MM\times\CP^1\longrightarrow\hat{Z},\;\;\;
(u,z)\longmapsto u(z)\]
is proper. Moreover, the degree of the evaluation map is~$1$,
because any $u\in\MM$ whose image $u(\CP^1)$
is disjoint from $\Int(W)$ is of the form $u(z)=(v_0,z)$, $z\in\CP^1$,
for some $v_0\in T^*Q$ by the maximum principle.

Let $Z^*$ be the space obtained by removing the complex hypersurfaces
$H_0$ and $H_{\infty}$ from $\hat{Z}$ (or by removing
$H_0$ from~$\WZ$), i.e.
\begin{equation}
\label{eqn:Z*}
Z^*=\hat{Z}\setminus(H_0\cup H_{\infty})=\WZ\setminus H_0.
\end{equation}
Observe that $Z^*$ deformation retracts onto~$W$.
By positivity of intersections, we have
$u^{-1}(H_{\infty})=\{\infty\}$ and $u^{-1}(H_0)=\{0\}$
for all $u\in\MM$. Therefore, the restriction of the
evaluation map to~$\C^*$, i.e.\ the map $\MM\times\C^*\rightarrow Z^*$,
is well defined, proper and of degree $1$.

Since the intersection number $u\bullet H_0$ equals~$1$,
positivity of intersections tells us that each
$u\in\MM$ intersects $H_0$ transversely, so that the vector space
\[ T_{(u,0)}\ev (\{\mathbf{0}\}\oplus T_0\CP^1) =
T_0u(T_0\CP^1) \]
is a complex line in $T_{u(0)}\hat{Z}$ transverse to $T_{u(0)}H_0$.
Even though $\MM$ is not compact, the spheres $u\in\MM$ that
intersect $H_0$ outside a certain compact region are standard.
Therefore, for $\varepsilon>0$ sufficiently small,
the preimage $\ev^{-1}(T^*Q\times D_{\varepsilon})$,
where $D_{\varepsilon}\subset \C$ denotes the closed
disc of radius~$\varepsilon$ centred at the origin,
is a product tubular neighbourhood
of $\ev^{-1}(H_0)=\MM\times\{0\}$ in $\MM\times\CP^1$.
It follows that the inclusion map
\[ \ev^{-1}(T^*Q\times\partial D_{\varepsilon})\longrightarrow
\MM\times\C^*\]
is a homotopy equivalence.

Now consider the following commutative diagram, where the vertical
arrows are inclusion maps:
\begin{diagram}
\ev^{-1}(T^*Q\times \partial D_{\varepsilon}) && \rTo^{\ev} &&
    T^*Q\times \partial D_{\varepsilon}\\
\dTo^{\simeq}                                   &&            &&
    \dTo_{\iota}\\
\MM\times\C^*                                   && \rTo^{\ev} &&
    Z^*.
\end{diagram}
\noindent
As a map of degree~$1$, the evaluation map at the bottom of the diagram
is surjective on~$\pi_1$, cf.\ \cite[Section~2.5]{bgz}.
It follows that $\iota$ is surjective on~$\pi_1$. 
Up to homotopy, this map $\iota$ may be regarded as the
inclusion of a section $Q\times S^1\subset ST^*(Q\times S^1)=M$
into $W\subset Z^*$, which factors through the
inclusion $M\rightarrow W$. Hence, that last inclusion must
likewise be $\pi_1$-surjective.
\end{proof}

Observe that by \cite[Lemma 2.1]{bgz}, any $u\in\MM$ with
$u(\CP^1)\cap\Int(W)=\emptyset$
is of the form $u(z)=(v_0,z)$, $z\in\CP^1$,
for some $v_0\in T^*Q$.
Therefore, in order to show that the evaluation map
$\ev\co\MM\times\CP^1\rightarrow\hat{Z}$ is proper under assumption (H-ii)
of Theorem~\ref{thm:pi1surjective} (which then completes the proof of
that theorem), it suffices to establish the following statement.

\begin{lem}
\label{lem:bubbling-ii}
Under assumption {\rm (H-ii)} of Theorem~\ref{thm:pi1surjective},
any sequence of holomorphic spheres in $\MM$
that intersect $\Int(W)$ non-trivially
has a $C^{\infty}$-converging subsequence.
\end{lem}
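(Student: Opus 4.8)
The plan is to rule out the two ways a sequence in $\MM$ could fail to $C^\infty$-converge: escape of the spheres to infinity in $\hat Z$ (non-compactness of the target), and bubbling off of holomorphic curves (energy concentration). The key extra input available under (H-ii), as opposed to (H-i), is that $W$ is symplectically aspherical but we no longer know $H_2(W,M)=0$; however we \emph{do} know $Q=S^1\times N$, hence $L=S^1\times S^1\times N$ has a circle factor that is killed inside a subcritical Stein manifold only via non-constant holomorphic discs (Proposition~\ref{prop:chekanov}), and more to the point $(\WZ,\Omega_Z)$ is still symplectically aspherical by the de Rham argument of Remark~\ref{rem:OV}(1) combined with the exactness that comes from the circle factor --- I would first check that the gluing $(\WZ,\Omega_Z)$ remains symplectically aspherical here, appealing to \cite[Lemma~3.4]{bgz} in the form that also covers the exact case.

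First I would fix the energy: every $u\in\MM$ represents the fixed class $[v\times\CP^1]$, so $\int_{\CP^1}u^*\hat\Omega$ is a topological constant $E_0$, and since $(\WZ,\Omega_Z)$ (equivalently $(\hat Z,\hat\Omega)$ away from $H_\infty$) is symplectically aspherical with the Fubini--Study part contributing a controlled amount, the Gromov energy is uniformly bounded. Then I would invoke Gromov compactness: a subsequence converges to a stable holomorphic building/cusp-curve. The target-compactness issue is handled exactly as in the main text: by the maximum principle in the $T^*Q$-direction (our almost complex structure is cylindrical outside $\Int(DT^*Q)$) together with positivity of intersection with $H_0,H_\infty$, no component can escape to the cylindrical end of $T^*Q$; and since we have compactified the $\C$-factor to $\CP^1$, there is no escape in that direction either. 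So the only thing to exclude is bubbling, i.e.\ that the limit is a genuine nodal curve with $\ge 2$ components.

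Here is where I would run the argument of \cite[Proposition~2.3]{bgz} adapted to (H-ii). A bubble tree configuration in class $[v\times\CP^1]$ decomposes the homology class as a sum of classes of the bubble components, each carrying positive $\hat\Omega$-area (by asphericity, no component has zero area unless constant). One component, say $u_0$, must still satisfy the intersection conditions with $H_0$ and $H_\infty$ (these are disjoint complex hypersurfaces met positively and with total intersection number $1$ each), so $u_0\bullet H_0=u_0\bullet H_\infty=1$ and every other bubble has intersection number $0$ with both. A bubble $u_j$ with $u_j\bullet H_\infty=0$ and lying in $(T^*Q\times\CP^1)\setminus\Int(DT^*L)$ would have to be constant by the maximum principle and exactness there; hence any non-constant bubble other than $u_0$ must enter $\Int(W)$. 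Its homology class is then a non-trivial spherical class in $H_2(W,\ldots)$ with positive $\omega$-area --- but $W$ is symplectically aspherical, so no such class exists. This forces all bubbles but $u_0$ to be constant, i.e.\ no bubbling, and the limit is a single smooth sphere in $\MM$; the subsequence converges in $C^\infty$.

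The main obstacle I anticipate is the bookkeeping at the node where $u_0$ meets a ghost bubble tree in $\partial W$ versus genuinely inside $\Int(W)$: one must make sure that ``the limit intersects $\Int(W)$ non-trivially'' (the hypothesis of the lemma) forces at least one \emph{non-constant} component to meet $\Int(W)$, and then pin down that this component's class is spherical in $W$ and $\omega$-positive, so that asphericity of $(W,\omega)$ applies. A secondary subtlety is making the energy identity genuinely work with the Fubini--Study completion --- one should choose the symplectic form on $\CP^1$ with small total area so that the class $[v\times\CP^1]$ does not acquire extra sphere-bubbling room there, exactly as is implicitly done when defining $\MM$. Both of these are handled by the same combination of positivity of intersections and the maximum principle already used repeatedly in the main text, so I expect the proof to be short once the asphericity of $(\WZ,\Omega_Z)$ is recorded.
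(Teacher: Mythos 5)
There is a genuine gap, and it occurs exactly at the point where the paper has to work hardest. You assert that in the limiting stable map a single component $u_0$ carries both intersection conditions, $u_0\bullet H_0=u_0\bullet H_\infty=1$, so that every other bubble has intersection number zero with both hypersurfaces and is therefore either standard (hence constant) or a sphere contained in $W$ (hence constant by asphericity). But positivity of intersections only tells you that the intersection with $H_0$ is carried by exactly one component and the intersection with $H_\infty$ by exactly one component; nothing forces these to be the \emph{same} component. The configuration you have not excluded is $u^1\bullet H_\infty=1$, $u^2\bullet H_0=1$ with $u^1\neq u^2$, both components passing through $\Int(W)$. The bubble $u^2$ then meets $\Int(W)$ but is \emph{not} contained in $W$ (it must reach $H_0$, which lies outside $W$), so its class is not a spherical class in $W$ and asphericity of $(W,\omega)$ says nothing about it. This is precisely the case the paper's proof is devoted to: it is ruled out by the ``moving complex hypersurface argument,'' which compactifies the $T^*S^1$-factor coming from the splitting $Q=S^1\times N$, introduces a transverse moduli space $\MM^{\vee}$, and derives a contradiction from the parity of boundary points of the one-manifold $\bigl(\ev^{\NN}\bigr)^{-1}\bigl(w^2(\CP^1)\bigr)$. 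Your proposal mentions the splitting $Q=S^1\times N$ but never actually uses it, which is a strong sign the hard case has been skipped: without that splitting the lemma is not known.

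A secondary problem is your opening claim that $(\WZ,\Omega_Z)$ remains symplectically aspherical under (H-ii). The cited tools do not give this: \cite[Lemma~3.4]{bgz} needs $H_2(W,M)=0$ (that is hypothesis (H-i), not (H-ii)), and the de Rham/exactness argument needs $H^1(M;\R)=0$, which by Proposition~\ref{prop:chekanov} never holds for $M=ST^*L$. If asphericity of the glued manifold were available, the lemma would follow from \cite[Proposition~2.3]{bgz} as in Remark~\ref{rem:nobubbles} and there would be nothing left to prove; the whole point of the lemma is to get by with asphericity of $W$ alone, at the price of the additional geometric input from the circle splitting of $Q$.
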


\begin{proof}
There exists a subsequence of the given sequence that Gromov-converges
to a stable map $(u^{\alpha})_{1\leq\alpha\leq N}$.
We need to show that $N=1$.

We may assume that $u^1\bullet H_{\infty}=1$ and
$u^j\bullet H_{\infty}=0$ for $j=2,\ldots,N$.
If in addition $u^1\bullet H_0=1$, then $u^j\bullet H_0=0$ for
$j=2,\ldots,N$, which by positivity of intersection implies
that these latter spheres are disjoint from~$H_0$, and hence
homotopic to spheres in $W$. In fact, the maximum principle
forces these spheres to be contained in~$W$.
As $(W,\omega)$ is symplectically aspherical, it does not contain
spheres of positive $\omega$-energy, which implies $N=1$.

The only other possibility is that, after reindexing the bubble spheres,
we have $u^2\bullet H_0=1$ and $u^j\bullet H_0=0$ for $j\neq2$,
see Figure~\ref{figure:twobubble}. This leads to the conclusion $N=2$
by a similar argument.

\begin{figure}[h]
\labellist
\small\hair 2pt
\pinlabel $W$ at 130 213
\pinlabel $\CP^1$ [tl] at 536 142
\pinlabel $T^*Q$ [r] at 286 282
\pinlabel $u^1$ [b] at 536 189
\pinlabel $u^1$ [b] at 44 189
\pinlabel $u^2$ [b] at 326 185
\pinlabel $H_0$ [l] at 290 25
\pinlabel $H_{\infty}$ [l] at 19 25
\endlabellist
\centering
\includegraphics[scale=0.4]{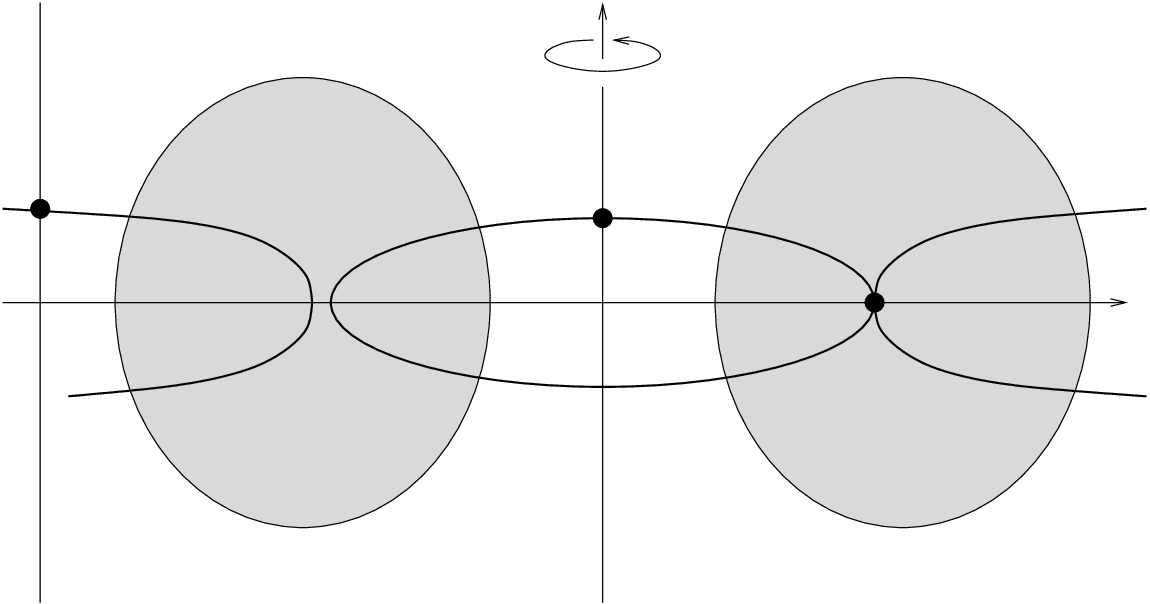}
  \caption{A potential bubbling.}
  \label{figure:twobubble}
\end{figure}

Our aim is to rule out this second case. Arguing by contradiction, we
assume that we have such a stable map $(u^1,u^2)$. We then observe
the following consequences of the maximum principle (in $T^*Q\times\C$
or separately in the factors $T^*Q$ and~$\C$). Notice that because
of $u^2\bullet H_{\infty}=0$ and positivity of intersection,
the sphere $u^2$ is disjoint from the hyperplane~$H_{\infty}$.
\begin{itemize}
\setlength\itemsep{.3em}
\item [(1)]
The sphere $u^2$ intersects $W$, since there are no non-constant
holomorphic sphere in $T^*Q\times\C$.
\item [(2)]
The sphere $u^2$ is disjoint from $(T^*Q\setminus DT^*Q)\times\CP^1$.
\item [(3)]
The sphere $u^2$ does not intersect $T^*Q\times\big(\C\setminus B_r\big)$
for $r\in(1,\infty)$ so large that $DT^*(Q\times S^1)\subset T^*Q\times
B_r$.
\end{itemize}

Therefore, and because of $u^2\bullet H_0=1$ and $u^2\bullet H_{\infty}=0$,
it is not possible to extend $(T^*Q\times [0,\infty])\setminus\Int(D_Z)$,
where $[0,\infty]\subset\RP^1\subset\CP^1$, through $W$ to a chain
with boundary $H_{\infty}-H_0$.

On the other hand, we shall now use the splitting $Q=S^1\times N$
for the construction of precisely such a chain, which is the
desired contradiction. To this end, we slightly modify the
definition of $(\hat{Z},\hat{\Omega})$ in this split setting,
without invalidating what we have said so far.

We compactify the $T^*S^1$-factor in $T^*S^1\times T^*N\times\CP^1$,
analogous to the last factor. The hypersurfaces $H_z$ are now
read as
\[ H_z=\CP^1\times T^*N\times\{z\}.\]
With this convention understood, the conclusions (1) to (3) above are
still valid.

Denote by $\MM^{\vee}$ the moduli space
of all holomorphic spheres $u\co\CP^1\rightarrow\hat{Z}$
with the following properties:
\begin{itemize}
\setlength\itemsep{.3em}
\item[(M$^{\vee}$1)] $[u]=[\CP^1\times\{v\}\times\{\infty\}]$
in $H_2(\hat{Z})$ for some $v\in T^*N\setminus DT^*N$;
\item[(M$^{\vee}$2)] $u(z)\in H_z^{\vee}:=\{z\}\times T^*N\times\CP^1$
for $z\in\{0,\varrho,\infty\}$.
\end{itemize}
The situation is illustrated in Figure~\ref{figure:hatZ}, where
the $T^*N$-factor is not shown. The shaded region is meant to
indicate a neighbourhood of $S^1\times S^1\subset\C^2$, obtained by
rotating the picture around a horizontal and a vertical axis.

\begin{figure}[h]
\labellist
\small\hair 2pt
\pinlabel $W$ at 126 268
\pinlabel $u^2$ [bl] at 236 326
\pinlabel $H_0$ [l] at 217 23
\pinlabel $H_{\infty}$ [l] at 73 23
\pinlabel $H_{\varrho}$ [l] at 362 23
\pinlabel $H_0^{\vee}$ [b] at 416 218
\pinlabel $H_{\infty}^{\vee}$ [b] at 416 74
\pinlabel $H_{\varrho}^{\vee}$ [b] at 416 362
\endlabellist
\centering
\includegraphics[scale=0.5]{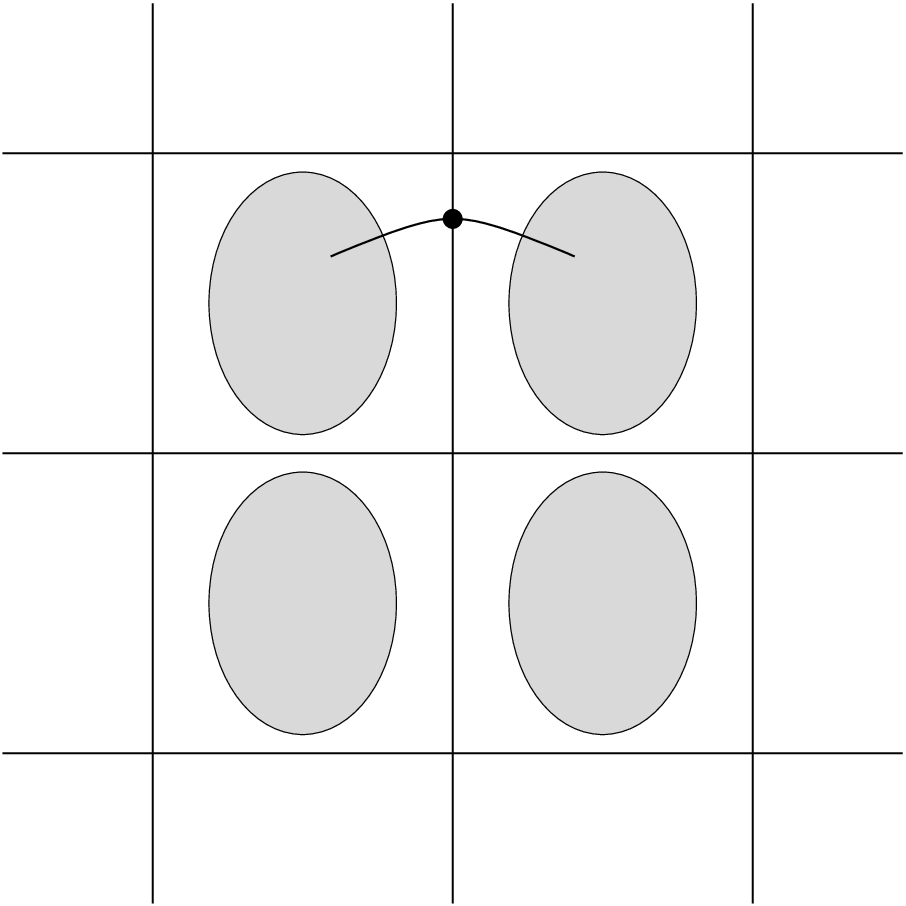}
  \caption{The space $\hat{Z}$ and the hyperplanes $H$ and $H^{\vee}$.}
  \label{figure:hatZ}
\end{figure}

\begin{rem}
\label{rem:H-foliate}
Notice that each of the hyperplanes $H_{\zeta}$, for $\zeta\in\CP^1$
sufficiently close to $0$ or~$\infty$, is foliated
by standard spheres in $\MM^{\vee}$ of the form
$z\mapsto(z,v_0,\zeta)$, $z\in\CP^1$, 
where $v_0\in T^*N$. By positivity of intersection, no
other sphere in $\MM^{\vee}$ intersects these hyperplanes.
\end{rem}

By an \emph{a priori} perturbation of the almost complex structure
over $\Int(W)$, the moduli space $\MM^{\vee}$
will be an oriented $(2n-2)$-dimensional manifold,
cf.\ \cite[Proposition~6.1]{geze12}. Here are the key arguments
to establish this fact.
All $u\in\MM^{\vee}$ intersect $H_{\infty}^{\vee}$ with
intersection number $1$, so that all $u\in\MM^{\vee}$ are simple.
Furthermore all $u\in\MM^{\vee}$ with image disjoint from $\Int(W)$
are standard spheres of the form $u(z)=(z,v_0,z_0)$, $z\in\CP^1$,
for some $v_0\in T^*Q$ and $z_0\in\CP^1$, so the freedom to choose
the almost complex structure over $\Int(W)$ suffices to achieve
regularity for all holomorphic spheres in $\MM^{\vee}$
by \cite[Remark~3.2.3]{mcsa12}.

Spheres in $\MM^{\vee}$ that do not intersect $\Int(W)$ are standard,
so to all intents and purposes this source of non-compactness can be ignored.
However, as with the original moduli space $\MM$, \emph{a priori}
it cannot be ruled out that the limit of a Gromov-convergent
sequence in $\MM^{\vee}$ happens to be a $1$-nodal stable map
$(v^1,v^2)$ representing the homology class
$[v^1]+[v^2]=[\CP^1\times\{v\}\times\{\infty\}]$;
simply replace the hyperplanes $H_0,H_{\infty}$ in the argument at
the beginning of this proof by $H_0^{\vee},H_{\infty}^{\vee}$. 
The two spheres $v^1,v^2$ can be characterised by $v^1\bullet
H_{\infty}^{\vee}=1$ and $v^2\bullet H_0^{\vee}=1$, and no intersections
with the other respective hyperplane. This intersection behaviour implies
that $v^1,v^2$ are simple, and $(v^1,v^2)$ is a simple stable map
in the sense of \cite[Definition~6.1.1]{mcsa12}.

Even so, we shall see presently that we can still use
$\MM^{\vee}$ to define a pseudocycle, which then allows us to
exclude the purported bubble sphere~$u^2$ coming from
a Gromov-convergent sequence in $\MM$. By the symmetry of the situation,
Gromov-limits of sequences in $\MM^{\vee}$,
\emph{a posteriori}, likewise lie in $\MM^{\vee}$.

Write $\calS$ for the moduli space of unparametrised $1$-nodal simple
stable spheres $(v^1,v^2)$, representing the given homology class,
with one marked point and the described
intersection behaviour with the hyperplanes $H_0^{\vee},H_{\infty}^{\vee}$,
and with non-trivial intersection of both components with $\Int(W)$.
By repeating the intersection argument for a sequence of such spheres,
we see that no further bubbling is possible. So $\calS$ is Gromov-compact
if we include the $1$-nodal spheres where the marked point
coincides with the nodal point --- these become stable maps
by introducing a ghost bubble at the nodal point \cite[p.~116]{mcsa12}.
As we shall see, these additional elements play no role
for the definition of the desired pseudocycle.

By \cite[Theorem~6.2.6]{mcsa12},
the set of regular almost complex structures, for which
$\calS$ is a smooth oriented manifold, is residual. As before,
by \cite[Remark 3.2.3]{mcsa12}
this remains true even if we require the almost complex structure
to be fixed outside $\Int(W)$.
Notice that in our situation we have an energy bound on the
holomorphic spheres, so there are only finitely many homology classes
represented by holomorphic spheres~\cite[Proposition~4.1.5]{mcsa12}.
This means that there are only finitely many splittings
$[v^1]+[v^2]$ in homology to consider. Without the energy bound there
could be countably many splittings, which would still be fine for an
application of \cite[Theorem~6.2.6]{mcsa12}, see the discussion
preceding that theorem.

Later on, we shall find further residual sets of almost complex structures
for which certain moduli spaces are manifolds, or evaluation maps
transverse to some submanifold. An almost complex structure in the
non-empty intersection of these residual sets will then settle all
transversality problems simultaneously.

Again by \cite[Theorem~6.2.6]{mcsa12}, the dimension of $\calS$ is
\begin{eqnarray*}
\dim\calS
 & = & 2n+2c_1(\CP^1)+2\cdot 1-6-2
       \cdot 1\\
 & = & 2n-2,
\end{eqnarray*}
where the first $1$ is the number of marked points, the second, the
number of edges in the bubble tree. We can derive this formula
\emph{ad hoc}, which will be useful for the subsequent discussion.
We may think of $\calS$ as the moduli space of triples
$\bigl((v^1,z_1),(v^2,z_2),z\bigr)$, where $v^1(z_1)=v^2(z_2)$ and $z$ is the
marked point (different from the nodal point), modulo reparametrisations.
The moduli space of pairs of parametrised spheres representing the class
of $\CP^1$ has dimension $(2+2)n+2c_1(\CP^1)=4n+4$. The marked point $z$
adds $2$ dimensions, the automorphisms fixing $z_1$ and $z_2$, respectively,
have dimension~$4$ for each sphere, and the nodal condition
$v^1(z_1)=v^2(z_2)$ means that the evaluation map $(\ev_1,\ev_2)$
at the nodal points, that is, $\ev_i(v^i)=v^i(z_i)$, $i=1,2$,
maps (transversely) to the diagonal in $\hat{Z}\times\hat{Z}$,
which has codimension~$2n$. In total, this yields
\[ \dim\calS=4n+4+2-2\cdot 4-2n=2n-2,\]
as before.

Now we are in the position to apply what we should like to
call the \emph{moving complex hypersurface argument}.
Observe that the evaluation map
\[ \ev^{\vee}\co\MM^{\vee}\times\CP^1\longrightarrow\hat{Z},\;\;
(u,z)\longmapsto u(z),\]
which maps $z=\infty$ to $H_{\infty}^{\vee}$,
is transverse to the complex hyperplanes $H_0$ and $H_{\infty}$
by Remark~\ref{rem:H-foliate}.

We now appeal to \cite[Theorem~6.6.1]{mcsa12} to see that,
for a residual set of almost complex structures, the evaluation map
$\ev^{\vee}$ defines a $2n$-dimensional pseudocycle. For this we may again
ignore the non-compactness coming from the `ends' of our moduli space,
which are made up entirely of standard spheres. The limit set
of the evaluation map in the sense of \cite[p.~178]{mcsa12}
is contained in $\ev_{\calS}(\calS)$, where $\ev_{\calS}$
denotes the evaluation at the marked point.

By the transversality results for curves with pointwise
constraints in \cite[Section~3.4]{mcsa12},
cf.\ \cite[Section 5.2]{geze16} and \cite[Remark~3.6]{zehm15},
we may assume ---
again for a residual set of almost complex structures ---
that $\ev^{\vee}$ and $\ev_{\calS}$ are
transverse to $\CP^1\times T^*N\times A$, where $A$ is
an arc in $\CP^1$ containing the interval $[0,\infty]\subset
\RP^1$ in its interior.
Then the preimage
\[ \NN:=\bigl(\ev^{\vee}\bigr)^{-1}\bigl(\CP^1\times T^*N\times
[0,\infty]\bigr)\subset\MM^{\vee}\times\CP^1 \]
is a $(2n-1)$-dimensional oriented submanifold of~$\MM^{\vee}\times\CP^1$
with boundary given by the preimage of
$\CP^1\times T^*N\times\{0,\infty\}=
H_0\cup H_{\infty}$; the evaluation map $\ev^{\vee}$ defines a foliation of
these two hyperplanes by standard spheres.
Similarly, the preimage
\[ \NN_{\calS}:=\ev_{\calS}^{-1}\bigl(\CP^1\times T^*N\times
[0,\infty]\bigr)\subset\calS\]
is a submanifold of dimension $2n-3$. It follows that
\[ \ev_{\NN}:=\ev^{\vee}|_{\NN}\co\NN\rightarrow\hat{Z}\]
is a pseudocycle whose limit set is contained in $\ev_{\calS}(\NN_{\calS})$.

In order to arrive at the desired contradiction, we now consider
the intersection of this pseudocycle with the purported bubble sphere $u^2$,
or rather a perturbation of $u^2$ into a smooth (but not
necessarily holomorphic) embedding $w^2\co\CP^1\rightarrow\hat{Z}$.
Because of $u^2\bullet H_0=1$,
the sphere $u^2$ is transverse to~$H_0$ and an embedding near it.
This means that $w^2$ may be chosen to coincide with $u^2$ on
\[ (u^2)^{-1}\bigl(\CP^1\times T^*N\times B_{\varepsilon}\bigr)\]
for some sufficiently small $\varepsilon>0$. Moreover, since 
$u^2\bullet H_{\infty}=0$, we may assume that $w^2$ is likewise
disjoint from~$H_{\infty}$.

Once again we wish to apply the transversality theory for
curves with pointwise constraints, now to $\ev_{\NN}$
and transversality to $w^2(\CP^1)$.
This works fine, as long as we do not run into the
situation that the nodal point of $(v^1,v^2)$ is mapped to~$w^2(\CP^1)$.
This situation can indeed be avoided, as a dimension count shows.
The space of pairs of unparametrised simple spheres
with one marked point each, representing in sum the
class $[\CP^1\times\{v\}\times\{\infty\}]$, has dimension
\[ (2+2)n+2c_1(\CP^1)-2\cdot 4=4n-4.\]
We now look at the evaluation map $(\ev_1,\ev_2)$ at the marked points,
and we make this transverse, by a generic choice
of almost complex structure, to the diagonal $\Delta_{w^2}$ in
$w^2(\CP^1)\times w^2(\CP^1)\subset\hat{Z}\times\hat{Z}$.
Since $\Delta_{w^2}$ has codimension $4n-2$ in $\hat{Z}\times\hat{Z}$,
this means that its preimage under $(\ev_1,\ev_2)$ will be empty, that is,
in the $1$-nodal spheres the nodal point never maps to $w^2(\CP^1)$.

By a further generic choice of almost complex structure we also
achieve transversality to $w^2(\CP^1)$ of the evaluation maps
$\ev_{\NN}$, that is, our pseudocycle, and
$\ev_{\calS}|_{\NN_{\calS}}$, whose image is the limit of the
pseudocycle. Since $w^2(\CP^1)$ has codimension $2n-2$ in~$\hat{Z}$,
this actually implies that the limit set, as the image of
the $(2n-3)$-dimensional manifold $\NN_{\calS}$, is disjoint from
$w^2(\CP^1)$, and $\ev_{\NN}^{-1}\bigl(w^2(\CP^1)\bigr)$ is
a $1$-dimensional submanifold of the $(2n-1)$-dimensional
manifold~$\NN$, in other words,
a finite, disjoint union of properly embedded circles and closed intervals.
Precisely one boundary point of the intervals
belongs to the boundary component $\bigl(\ev^{\vee}\bigr)^{-1}(H_0)$
of~$\NN$, again because
of $u^2\bullet H_0=1$ and Remark~\ref{rem:H-foliate}.
So there would have to be a boundary point in
$\bigl(\ev^{\vee}\bigr)^{-1}(H_{\infty})$.
But this would mean that $w^2\bullet H_{\infty}\neq 0$,
contradicting the fact that $w^2(\CP^1)$
is disjoint from $H_{\infty}$.

So the bubble $u^2$ cannot have existed in the first place.
\end{proof}
\subsection{Infinite holomorphic strips}
\label{subsection:strips}
We continue to assume that hypothesis (H) is satisfied.
In addition, we now assume
that $\pi_1(Q)$ is abelian, so that the homomorphism
$\pi_1(M)\rightarrow\pi_1(W)$ induced by inclusion
is an isomorphism, as explained in Remark~\ref{rem:pi1-abelian}.
This allows us to pass to the universal covers. More generally,
it would suffice to require that the homomorphism
$\pi_1(M)\rightarrow\pi_1(W)$ is injective, since surjectivity
is guaranteed by Theorem~\ref{thm:pi1surjective}.

Recall the definition of $Z^*$ from (\ref{eqn:Z*}).
We consider the proper degree $1$ evaluation map
$\ev\co\MM\times\C^*\rightarrow Z^*$.
Define
\[ Z_{\varepsilon}=\WZ\setminus
\bigl(T^*Q\times B_{\varepsilon}\bigr), \]
i.e.\ $Z_{\varepsilon}$ is obtained from $Z^*$
by removing $T^*Q\times\bigl(B_{\varepsilon}\setminus\{0\}\bigr)$.
Observe that $\partial Z_{\varepsilon}=T^*Q\times\partial D_{\varepsilon}$.

The universal cover of $Z_{\varepsilon}$ is
\[ \wtZ_{\varepsilon} = \wtW\cup_{\wtM}
\bigl( (T^*\wtQ\times[\varepsilon,\infty)\times\R)
\setminus DT^*\wtL\bigr),\]
where $\wtL=\wtQ\times\R$,
and the universal cover of $\partial Z_{\varepsilon}$ equals
\[ \partial\wtZ_{\varepsilon} =
T^*\wtQ\times\{\varepsilon\}\times\R.\]

Set $\C_{\varepsilon}:=\C\setminus B_{\varepsilon}$.
After reparametrising the non-standard discs in $\MM$ so as
to make them look like standard disc on $D_{\varepsilon}\setminus\{0\}$,
we may assume
that the evaluation map restricts to a proper degree~$1$ map
\[ \ev_{\varepsilon}\co(\MM\times\C_{\varepsilon},\MM\times\partial
D_{\varepsilon})\longrightarrow (Z_{\varepsilon},\partial
Z_{\varepsilon}).\]
Notice the slight abuse of notation: elements in $\MM$ are
now understood to be these reparametrised curves.

We now want to construct, on a suitable covering space
$\MM'$ of~$\MM$, a proper degree~$1$ map to
$(Z_{\varepsilon},\partial Z_{\varepsilon})$ that covers
$\ev_{\varepsilon}$. Observe that for every element
$u\co\C_{\varepsilon}\rightarrow Z_{\varepsilon}$ of~$\MM$
we have $u(\varrho)\in H_{\varrho}$ by construction,
where $\varrho$ is the marking from (M2).
Moreover, $u$ and $H_{\varrho}$ intersect uniquely at $\varrho\in\C$.
Therefore, writing
\[ u(\varrho)=:(v,\varrho)\in T^*Q\times\{\varrho\}=
H_{\varrho},\]
we obtain for each lift $\tilde{v}\in T^*\wtQ$ of $v$
a lift
\[ u'\equiv u'_{\tilde{v}}\co [\varepsilon,\infty)\times\R\longrightarrow
\wtZ_{\varepsilon} \]
of $u\in\MM$ with
\[ u'(\varrho,0)=(\tilde{v},\varrho,0)\in
T^*\wtQ\times[\varepsilon,\infty)\times\R.\]
We define $\MM'$ as the set of all such lifts, equipped with
$C^{\infty}_{\loc}$-topology to make it a covering space of $\MM$.
The map $\ev_{\varepsilon}$ lifts to
\[ \ev_{\varepsilon}'\co (\MM'\times[\varepsilon,\infty)\times\R,
  \MM'\times\{\varepsilon\}\times\R) \longrightarrow
(\wtZ_{\varepsilon},\partial\wtZ_{\varepsilon}). \]

\begin{prop}
\label{prop:ev'proper}
The evaluation map $\ev_{\varepsilon}'$ is proper of degree~$1$.
\end{prop}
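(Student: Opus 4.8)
The plan is to deduce both properties — degree~$1$ and properness — from the corresponding facts about $\ev_{\varepsilon}$ together with the covering-space structure relating $\MM'\to\MM$, $\wtZ_{\varepsilon}\to Z_{\varepsilon}$, and the respective boundary covers. First I would pin down the covering-theoretic bookkeeping: by construction $\MM'$ is the pullback of the universal cover $T^*\wtQ\to T^*Q$ along the "record the value at~$\varrho$" map $\MM\to H_{\varrho}=T^*Q$, $u\mapsto v$. Since $\pi_1(M)\to\pi_1(W)$ is an isomorphism (our standing assumption in Section~\ref{subsection:strips}), $\pi_1(Z_{\varepsilon})=\pi_1(W)=\pi_1(M)=\pi_1(Q)=\pi_1(T^*Q)$, and the section $Q\times S^1\subset M$ realising $\pi_1$ factors through $H_{\varrho}$; so the universal cover $\wtZ_{\varepsilon}\to Z_{\varepsilon}$ restricts over a standard disc fibre $\{v\}\times\C_{\varepsilon}$ to the trivial cover, and the lift $u'_{\tilde v}$ of $u$ is uniquely determined by the choice of $\tilde v$. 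This identifies the deck group of $\MM'\to\MM$ with $\pi_1(Q)$ acting by translating $\tilde v$, compatibly with the deck action on $\wtZ_{\varepsilon}$, and shows the square
\begin{diagram}
\MM'\times[\varepsilon,\infty)\times\R & \rTo^{\ev_{\varepsilon}'} & \wtZ_{\varepsilon}\\
\dTo & & \dTo\\
\MM\times\C_{\varepsilon} & \rTo^{\ev_{\varepsilon}} & Z_{\varepsilon}
\end{diagram}
is a pullback of coverings (after identifying $[\varepsilon,\infty)\times\R\cong\C_{\varepsilon}$ via the reparametrisation); the same holds for the boundary restriction. The degree statement then follows immediately: degree is a local notion computed at a regular value, and lifting a regular value of $\ev_{\varepsilon}$ and its preimage points through the covering preserves local degrees and the count, so $\deg\ev_{\varepsilon}'=\deg\ev_{\varepsilon}=1$; orientations are inherited because all the covering projections are orientation-preserving local diffeomorphisms.

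The substantive point — and the main obstacle — is properness of $\ev_{\varepsilon}'$, since properness is emphatically not inherited by pullbacks of non-compact covers in general. Here I would argue directly. Let $\wtK\subset\wtZ_{\varepsilon}$ be compact; it maps to a compact $K\subset Z_{\varepsilon}$, and since $\ev_{\varepsilon}$ is proper, $\ev_{\varepsilon}^{-1}(K)$ is compact in $\MM\times\C_{\varepsilon}$. The preimage $(\ev_{\varepsilon}')^{-1}(\wtK)$ is a closed subset of the cover of $\ev_{\varepsilon}^{-1}(K)$, so it suffices to show that over any compact subset of $\MM\times\C_{\varepsilon}$ only finitely many deck translates of points of $\MM'\times[\varepsilon,\infty)\times\R$ can land in $\wtK$. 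For this I use that the deck translation on $\wtZ_{\varepsilon}$ by a nontrivial $\gamma\in\pi_1(Q)$ moves the "$T^*\wtQ$-coordinate" by a lattice-like amount (the action is properly discontinuous and cocompact on each fibre $T^*\wtQ$ of the splitting, because $Q$ is closed), and $\ev_{\varepsilon}'$ respects this coordinate up to the parametrisation: on the standard part, $\ev_{\varepsilon}'(u'_{\tilde v},\zeta)$ has $T^*\wtQ$-component equal to the lift of the $T^*Q$-component of $\ev_{\varepsilon}(u,\zeta)$ determined by $\tilde v$, and on the part of $u$ entering $\wtW$ the $C^0$-diameter of the image is controlled uniformly over the compact set $\ev_{\varepsilon}^{-1}(K)$ by Gromov compactness / the uniform energy bound on $\MM$. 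Hence points of $\MM'$ over a fixed compact set of $\MM$ that evaluate into $\wtK$ have their $\tilde v$-coordinate confined to a bounded region, which contains only finitely many lattice translates; this gives local finiteness of the cover restricted to $\ev_{\varepsilon}^{-1}(\wtK)$, whence compactness.

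Finally I would record the boundary compatibility explicitly, since the map is asserted proper as a map of pairs: $\ev_{\varepsilon}'$ sends $\MM'\times\{\varepsilon\}\times\R$ to $\partial\wtZ_{\varepsilon}=T^*\wtQ\times\{\varepsilon\}\times\R$ — this is immediate from $\ev_{\varepsilon}$ sending $\MM\times\partial D_{\varepsilon}$ to $\partial Z_{\varepsilon}$ together with the fact that the lifts $u'$ are, by construction, standard on $D_{\varepsilon}\setminus\{0\}$, so near the boundary $\ev_{\varepsilon}'$ is literally the trivial-cover restriction of $\ev_{\varepsilon}$ and properness there is clear. The only place where genuine analysis enters is the uniform $C^0$-bound on curves in $\MM$ over a compact parameter set, and that is supplied by the Gromov compactness already established for $\MM$ in the setup (no bubbling, by Remark~\ref{rem:nobubbles} and Lemma~\ref{lem:bubbling-ii}); everything else is point-set topology of covering spaces.
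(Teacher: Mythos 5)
Your overall strategy (covering-space bookkeeping, reduction to finitely many deck translates via proper discontinuity, plus a uniform geometric bound coming from compactness of $\MM$ over $\ev_{\varepsilon}^{-1}(K)$) is the same family of argument as the paper's, and your degree argument is acceptable, though the paper's is more economical: it simply exhibits the standard strips $(r,\theta)\mapsto(\tilde v,r,\theta)$ with $\tilde v$ large in the fibre direction, near which $\ev_{\varepsilon}'$ is visibly a bijection.

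There is, however, a genuine gap in the properness argument: you only control the $\pi_1(Q)$-direction of the deck group (the $\tilde v$-coordinate) and never bound the angular coordinate $\theta$, i.e.\ the $\Z$-factor of the deck group $\pi_1(Z_{\varepsilon})\cong\pi_1(Q)\times\Z$ coming from the $S^1$-factor of $L=Q\times S^1$ (note in passing that $\pi_1(M)=\pi_1(Q\times S^1)=\pi_1(Q)\times\Z$, not $\pi_1(Q)$, and that the $\pi_1(Q)$-action on $T^*\wtQ$ is properly discontinuous but not cocompact). Compactness of $(\ev_{\varepsilon}')^{-1}(\wtK)$ requires, for each lift $u'$, that $\{(r,\theta):u'(r,\theta)\in\wtK\}$ be bounded in $\theta$, uniformly over the relevant compact family; your argument as written would not exclude a sequence $(u_{\nu}',r_{\nu},\theta_{\nu})$ with $\theta_{\nu}\to\infty$. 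Moreover, the uniform $C^0$-diameter bound you invoke is false for the lifted curves: each $u'$ is an \emph{infinite} strip whose image in $\wtZ_{\varepsilon}$ is unbounded (it traverses infinitely many fundamental domains of $\wtW$ as $\theta\to\pm\infty$), so no Gromov-compactness bound on the downstairs curves transfers to a diameter bound upstairs. This is precisely the point the paper's proof addresses with the line segment $\ell_{\nu}$ from $(\varrho,2\pi k_{\nu})$ to $(r_{\nu},\theta_{\nu})$, where $k_{\nu}$ is chosen so that $\theta_{\nu}-2\pi k_{\nu}$ stays bounded: the uniform gradient bound on the compact family $u_{\nu}\to u_0$ bounds the length of $u_{\nu}'(\ell_{\nu})$, hence places the marked point $u_{\nu}'(\varrho,2\pi k_{\nu})=(\tilde v_{\nu},\varrho,2\pi k_{\nu})$ at bounded distance from the compact set $\wtK$; since this point lies in the standard region $T^*\wtQ\times[\varepsilon,\infty)\times\R$, this controls $k_{\nu}$ and $\tilde v_{\nu}$ \emph{simultaneously}, and with them $\theta_{\nu}$. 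Implicit here is the topological fact that going once around in $\theta$ corresponds to a deck transformation of infinite order (the meridian of $H_0$, which $u$ links once), which is what makes the individual lifts proper in the first place; some version of this input is indispensable and is missing from your write-up.
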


\begin{proof}
Once properness of $\ev_{\varepsilon}'$ has been established,
the mapping degree is well defined. By looking at standard holomorphic strips
of the form $(r,\theta)\mapsto(\tilde{v},r,\theta)$,
with $\tilde{v}\in T^*\wtQ$ sufficiently large (in fibre
direction), we see that this mapping degree equals~$1$.
 
Regarding properness, we need to show that
the preimage
\[ (\ev_{\varepsilon}')^{-1}(\wtK)\subset
\MM'\times[\varepsilon,\infty)\times\R\]
of any given compact subset $\wtK\subset
\wtZ_{\varepsilon}$ is compact.
Observe that $\wtK$ projects to a compact set
$K\subset Z_{\varepsilon}$, and
\[ (\ev_{\varepsilon})^{-1}(K)\subset\MM\times\C_{\varepsilon}\]
is compact by the properness of~$\ev_{\varepsilon}$.

Let $(u'_{\nu},r_{\nu},\theta_{\nu})$ be a sequence in
$(\ev_{\varepsilon}')^{-1}(\wtK)$.
After selecting a subsequence, we may assume that
$u_{\nu}'(r_{\nu},\theta_{\nu})$
converges to some point $\tilde{p}\in\wtZ_{\varepsilon}$,
and that the projection
\[ (u_{\nu},z_{\nu}=r_{\nu}\rme^{\rmi\theta_{\nu}})
\in(\ev_{\varepsilon})^{-1}(K)\subset\MM\times\C_{\varepsilon} \]
of $(u'_{\nu},r_{\nu},\theta_{\nu})$ converges to a pair $(u_0,z_0)$.
In particular, writing $z_0=r_0\rme^{\rmi\theta_0}$,
this means that $\theta_{\nu}$ converges to
$\theta_0$ modulo $2\pi$, and $r_{\nu}$ converges to $r_0$.
The point $\tilde{p}$ projects to $p=u_0(z_0)$. The situation is
summarised in the following diagram:

\begin{diagram}
(u_{\nu}',r_{\nu},\theta_{\nu})\rightarrow(?,r_0,\theta_0\,\text{mod $2\pi$})
\in && \MM'\times\widetilde{\C}_{\varepsilon}
    && \rTo^{\ev_{\varepsilon}'}
    & \wtZ_{\varepsilon} & \supset\wtK\ni\tilde{p}\\
    && \dTo &&  & \dTo &            & \\
(u_{\nu},r_{\nu}\rme^{i\theta_{\nu}})\rightarrow (u_0,z_0)
\in && \MM\times\C_{\varepsilon}
    && \rTo^{\ev_{\varepsilon}}
    & Z_{\varepsilon}             & \supset K\ni p.
\end{diagram}

Choose $k_{\nu}\in\Z$ such that
\[ \theta_{\nu}-2\pi k_{\nu}\longrightarrow \theta_0\;\;\;
\text{for}\;\;\nu\rightarrow\infty.\]
Write $\ell_{\nu}$ for the line segment in $[\varepsilon,\infty)\times\R$
connecting $(\varrho,2\pi k_{\nu})$ with $(r_{\nu},\theta_{\nu})$.
The arc $u_{\nu}'(\ell_{\nu})$ projects to $u_{\nu}(\ell_{\nu})$.
Since $u_{\nu}\rightarrow u_0$ in $\MM$, the distance
\[ d\bigl(u_{\nu}'(\varrho,2\pi k_{\nu}),
u_{\nu}'(r_{\nu},\theta_{\nu})\bigr)\]
in $\wtZ_{\varepsilon}$ is uniformly bounded from above
by a constant times the length of~$\ell_{\nu}$, and hence stays bounded as
$\nu\rightarrow\infty$. Recall that $u_{\nu}'(\varrho,2\pi k_{\nu})$
may be written as $(\tilde{v}_{\nu},\varrho,2\pi k_{\nu})$.
Then, from $u_{\nu}'(r_{\nu},\theta_{\nu})\rightarrow\tilde{p}$,
we deduce that $(\tilde{v}_{\nu},\varrho,2\pi k_{\nu})$ stays
at bounded distance from $\tilde{p}$ in~$\wtZ_{\varepsilon}$.

Hence, after passing to a subsequence we may assume that
the sequence $(\tilde{v}_{\nu})$ converges to some point $\tilde{v}_0
\in T^*\wtQ$, and $k_{\nu}=k_0$ for all~$\nu$. Then
the sequence $(u_{\nu}') $ of lifted holomorphic curves converges
to $u_0'\in\widetilde{\MM}$, characterised by
$u_0'(\varrho,0)=(\tilde{v}_0,\varrho,0)$.
\end{proof}
\section{Proof of the main result}
We are now in the position to prove our main result Theorem~\ref{thm:main}
about the uniqueness of certain fillings up to homotopy equivalence
or up to diffeomorphism, respectively.
\subsection{Homotopy type}
\label{subsection:homotopy-type}
In this section we prove part (a) of Theorem~\ref{thm:main}.

\begin{prop}
\label{prop:homsurj}
Let $Q$ be a closed, connected manifold and
$(W,\omega)$ a symplectically aspherical filling of $M=ST^*(Q\times S^1)$
satisfying hypothesis~{\rm (H)}.
Assume that $\pi_1(Q)$ is abelian or, more generally,
that the inclusion $M\rightarrow W$ is $\pi_1$-injective.
Then the composition
$\wtQ\rightarrow\wtM\rightarrow\wtW$
of inclusion maps of universal covers
is surjective on homology.
\end{prop}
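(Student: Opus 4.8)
The plan is to mimic the degree-theoretic argument used for $\pi_1$ in the proof of Theorem~\ref{thm:pi1surjective}, but now one dimension up, working with the lifted evaluation map $\ev_{\varepsilon}'$ from Proposition~\ref{prop:ev'proper} instead of the sphere evaluation map. The point is that a proper map of degree $1$ between (oriented, connected) manifolds of the same dimension is surjective on homology with any coefficients — this is the homological analogue of the $\pi_1$-surjectivity of a degree-$1$ map, and it follows from the fact that such a map hits the fundamental class, so that the composite $H_*^{\mathrm{lf}}(\text{target})\to H_*^{\mathrm{lf}}(\text{source})\to H_*^{\mathrm{lf}}(\text{target})$ (locally finite/Borel--Moore homology, dual to compactly supported cohomology) is multiplication by the degree. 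I would phrase this via compactly supported cohomology and Poincar\'e duality so that properness, not compactness, is what is needed.

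The key steps, in order: First, recall from Proposition~\ref{prop:ev'proper} that $\ev_{\varepsilon}'\colon(\MM'\times[\varepsilon,\infty)\times\R,\MM'\times\{\varepsilon\}\times\R)\to(\wtZ_{\varepsilon},\partial\wtZ_{\varepsilon})$ is proper of degree $1$; hence on the level of interiors it is a proper degree-$1$ map of oriented manifolds of dimension $2n$, and therefore surjective on homology (with $\Z$-coefficients). Second, identify the homotopy type of the relevant pieces: just as $Z^*$ deformation retracts onto $W$, the space $\wtZ_{\varepsilon}$ (equivalently $\wtZ^*$) deformation retracts onto $\wtW$, because removing $T^*\wtQ\times(B_{\varepsilon}\setminus\{0\})\times$(nothing) from the product-end collapses it; so $H_*(\wtZ_{\varepsilon})\cong H_*(\wtW)$, and the image of $H_*(\ev_{\varepsilon}')$ lands in — indeed exhausts — $H_*(\wtW)$. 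Third, factor the map through the boundary: the section gives, up to homotopy, an inclusion of $\wtQ\times\R\subset ST^*\wtL=\wtM$ into $\wtW$, and this inclusion is (again up to homotopy) the map $\iota'$ obtained by restricting $\ev_{\varepsilon}'$ to a standard slice $\MM'\times\{\varrho\}\times\{*\}$, or more precisely to $\ev_{\varepsilon}'{}^{-1}(\text{a small tube around }H_0)$, exactly as $\iota$ was treated in the proof of Theorem~\ref{thm:pi1surjective}. Composing, $\ev_{\varepsilon}'$ factors (up to homotopy) through the inclusion $\wtQ\to\wtW$; since $\ev_{\varepsilon}'$ is surjective on $H_*$ and it factors through $\wtQ\to\wtW$, that inclusion is also surjective on $H_*$. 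Finally, note $\wtQ\to\wtM\to\wtW$ is this very composition, giving the claim.

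There are two things to be careful about. The first, and I expect the main obstacle, is making rigorous the claim that the restriction of $\ev_{\varepsilon}'$ to the preimage of a tubular neighbourhood of $H_0$ is a homotopy equivalence onto something homotopic to the inclusion of a section $\wtQ\times\R\hookrightarrow\wtZ_{\varepsilon}$ — i.e.\ the lifted version of the argument in Theorem~\ref{thm:pi1surjective} that produced $\iota$. One needs that the spheres (now: strips) in $\MM'$ intersecting $H_0$ far out in the fibre direction of $T^*\wtQ$ are standard, so that $\ev_{\varepsilon}'{}^{-1}(T^*\wtQ\times\partial D_{\varepsilon})$ is a product tubular neighbourhood of $\ev_{\varepsilon}'{}^{-1}(H_0)=\MM'\times\{\varepsilon\}\times\R$; this is where the maximum principle in fibre direction and the reparametrisation "near $D_{\varepsilon}\setminus\{0\}$" already built into the definition of $\MM$ are used. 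The second point is bookkeeping about degree: "degree $1$" for a proper map of non-compact manifolds means the induced map $H^{\mathrm{BM}}_{2n}(\text{source})\to H^{\mathrm{BM}}_{2n}(\text{target})$ sends fundamental class to fundamental class, and the argument that degree $=1$ in Proposition~\ref{prop:ev'proper} (standard strips with $\tilde v$ large) already supplies this; from here surjectivity on $H_*$ is the standard transfer/cap-product fact and I would just cite it, in the same spirit as the appeal to \cite[Section~2.5]{bgz} in the proof of Theorem~\ref{thm:pi1surjective}.
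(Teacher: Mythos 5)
Your proposal is correct and follows essentially the same route as the paper: properness and degree~$1$ of $\ev_{\varepsilon}'$ give surjectivity on homology (the paper cites the argument of \cite[Proposition~2.4]{bgz}, which is exactly the Poincar\'e-duality/compact-support fact you spell out), and then factoring $\ev_{\varepsilon}'$ up to homotopy through the boundary restriction $\MM'\times\{\varepsilon\}\times\R\to\partial\wtZ_{\varepsilon}$, together with the deformation retractions of $\partial\wtZ_{\varepsilon}$ onto $\wtQ$ and of $\wtZ_{\varepsilon}$ onto $\wtW$, identifies the boundary inclusion with the composition $\wtQ\rightarrow\wtM\rightarrow\wtW$. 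The ``main obstacle'' you anticipate is in fact already built into the setup of Section~\ref{subsection:strips}: after the reparametrisation there, $\ev_{\varepsilon}'$ is a map of pairs whose source deformation retracts onto $\MM'\times\{\varepsilon\}\times\R$, so no further tubular-neighbourhood argument is needed.
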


\begin{proof}
The evaluation map $\ev_{\varepsilon}'$ from Proposition~\ref{prop:ev'proper}
fits into the following diagram, where the horizontal map
at the top is the obvious restriction of~$\ev_{\varepsilon}'$,
and the vertical maps are inclusions:

\begin{diagram}
\MM'\times\{\varepsilon\}\times\R      && \rTo^{\ev_{\varepsilon}'} &&
   \partial\wtZ_{\varepsilon}\\
\dTo^{\simeq}                          &&                           &&
   \dTo\\
\MM'\times[\varepsilon,\infty)\times\R && \rTo^{\ev_{\varepsilon}'} &&
   \wtZ_{\varepsilon}.
\end{diagram}

A homological argument as in \cite[Proposition~2.4]{bgz}
shows that $\ev_{\varepsilon}'$ at the bottom is surjective
on homology. It follows that the inclusion map
$\partial\wtZ_{\varepsilon}\rightarrow\wtZ_{\varepsilon}$
must likewise be surjective on homology. Observe that
$\partial\wtZ_{\varepsilon}=T^*\wtQ\times
\{\varepsilon\}\times\R$ strongly deformation
retracts onto $\wtQ\times\{\varepsilon\}\times\{0\}$,
where $\wtQ$ is regarded as the zero section of~$T^*\wtQ$.
Also, $\wtZ_{\varepsilon}$ strongly deformation retracts
onto~$\wtW$. The latter retraction may be assumed to
send $\wtQ\times\{\varepsilon\}\times\{0\}$ to the
copy of $\wtQ$ in $\wtM$ coming from the
inclusion $Q\times S^1\subset M$ described in
Section~\ref{subsection:split}. This means that the inclusion
$\partial\wtZ_{\varepsilon}\rightarrow\wtZ_{\varepsilon}$
retracts to the composition
$\wtQ\rightarrow\wtM\rightarrow\wtW$
of inclusions. This proves the proposition.
\end{proof}

Recall from Section~\ref{subsection:split}
the definition of the copy $W_0\subset W$ of $DT^*(Q\times S^1)$.
This $W_0$ retracts to $Q\times S^1\subset M\subset W$,
so the inclusion $\wtW_0\rightarrow\wtW$
is likewise surjective on homology.

The following theorem is the analogue of results in
\cite[Section~7]{bgz} in the present more general setting.

\begin{thm}
\label{thm:homotopequiv}
Let $Q$ be a closed, connected manifold and $(W,\omega)$
a symplectically aspherical filling of $M=ST^*(Q\times S^1)$
satisfying hypothesis~{\rm (H)}.
Assume further that one of the following conditions is satisfied:
\begin{itemize}
\item[($\alpha$)] $Q$ is aspherical and $\pi_1(Q)$ abelian;
\item[($\beta$)] $M$ is a simple space.
\end{itemize}
Then $W$ has the homotopy type of $DT^*(Q\times S^1)$.
\end{thm}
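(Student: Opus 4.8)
The goal is to upgrade the homology equivalence $W_0\hookrightarrow W$ of Theorem~\ref{thm:inducedmap} to a homotopy equivalence, where $W_0$ is a copy of $DT^*(Q\times S^1)\simeq Q\times S^1$. The standard route is via Whitehead's theorem: a map between CW complexes inducing isomorphisms on all homotopy groups is a homotopy equivalence. Since we already control $\pi_1$ and $H_*$, the plan is to reduce everything to a statement about universal covers and apply the homology Whitehead theorem there. First I would recall that by Theorem~\ref{thm:pi1surjective} the inclusion $M\to W$ is $\pi_1$-surjective, and under either hypothesis ($\alpha$) or ($\beta$) it is in fact $\pi_1$-\emph{isomorphic}: in case ($\alpha$) this is Remark~\ref{rem:pi1-abelian}(2) since $\pi_1(Q)$ abelian forces $\pi_1(M)=H_1(M)\xrightarrow{\cong}H_1(W)=\pi_1(W)$ once $\pi_1(W)$ is known to be abelian; in case ($\beta$), $M$ simple means $\pi_1(M)$ acts trivially on all $\pi_*(M)$, and one argues $\pi_1$-injectivity as in \cite{bgz}. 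Consequently $\pi_1(W_0)=\pi_1(M)=\pi_1(W)$, so the inclusion $W_0\hookrightarrow W$ is $\pi_1$-isomorphic, and it suffices to show the lift $\wtW_0\to\wtW$ of universal covers is a homology isomorphism; then Whitehead (in its homology form for simply connected spaces) gives that $\wtW_0\to\wtW$ is a homotopy equivalence, hence so is $W_0\to W$ by the five-lemma on homotopy groups.

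The heart of the matter is therefore proving $H_*(\wtW_0)\xrightarrow{\cong}H_*(\wtW)$. The surjectivity is already in hand: Proposition~\ref{prop:homsurj} shows $\wtQ\to\wtM\to\wtW$ is a homology epimorphism, and the remark after it notes that $\wtW_0\to\wtW$ factors this and is therefore also a homology epimorphism (indeed $\wtW_0\simeq\wtQ\times\R\simeq\wtQ$). For injectivity I would run the Mayer--Vietoris/five-lemma argument of Theorem~\ref{thm:inducedmap} at the level of universal covers: lift the decompositions $W_0'=W_0\cup_{T^*Q\times S^1}(T^*Q\times D^2)$ and $W'=W\cup_{T^*Q\times S^1}(T^*Q\times D^2)$ to the universal covers (using $\pi_1$-isomorphicity to ensure the pieces lift compatibly), obtaining $\wtW_0'=\wtW_0\cup_{T^*\wtQ\times\R}(T^*\wtQ\times D^2)$ and similarly for $\wtW'$, and conclude via the five-lemma that $\wtW_0\to\wtW$ is a homology isomorphism provided $\wtW_0'\to\wtW'$ is. The latter, in turn, should follow from the covering-space version of the filling-by-holomorphic-spheres argument of \cite[Theorem~1.2]{bgz} combined with the properness and degree-$1$ properties of the lifted evaluation map established in Proposition~\ref{prop:ev'proper} and the diagram in the proof of Proposition~\ref{prop:homsurj}. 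Alternatively — and this is likely the cleaner path — I would argue directly that $\wtW_0\to\wtW$ is a homology epimorphism (Proposition~\ref{prop:homsurj}) and that the relative homology $H_*(\wtW,\wtW_0)$ vanishes by a Poincaré--Lefschetz duality argument on the non-compact cobordism $\wtX=\wtW\setminus\Int(\wtW_0)$, as in Lemma~\ref{lem:cobordism}: one has $H_*(\wtX,\wtM_0)\cong H_*(\wtW,\wtW_0)$ by excision, these vanish because $\wtW_0\to\wtW$ is a homology iso on the part already known, and then $H^*_c(\wtX,\wtM)$ vanishes by duality, giving $H_*(\wtX,\wtM)=0$ and hence $\wtM\to\wtX$, $\wtW_0\to\wtW$ homology isomorphisms.

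Having established that $W_0\hookrightarrow W$ is a homotopy equivalence, the proof concludes by recording that $W_0$ is diffeomorphic to $DT^*(Q\times S^1)$ by construction (Section~\ref{subsection:split}), so $W\simeq DT^*(Q\times S^1)$.

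\textbf{Main obstacle.} The delicate point is the lift of the Mayer--Vietoris argument to universal covers: one must check that the $\pi_1$-isomorphicity of $M\to W$ (hence of each relevant subspace inclusion) is enough to lift the decomposition $W'=W\cup(T^*Q\times D^2)$ coherently, i.e.\ that the preimages of the pieces in $\wtW'$ are again of the expected product/filling form and that the gluing locus $T^*Q\times S^1$ lifts to $T^*\wtQ\times\R$ (connected!) rather than to a disjoint union of copies. This is where hypotheses ($\alpha$) and ($\beta$) are genuinely used — ($\beta$) via $M$ simple guaranteeing $\pi_1$-injectivity and good behaviour of covers, ($\alpha$) via asphericity of $Q$ (so $\wtQ$ is contractible and $\wtW_0\simeq\wtQ\times\R$ is contractible, making the homology computation trivial) together with $\pi_1(Q)$ abelian for the $\pi_1$-isomorphism. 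I would handle this by working throughout with the explicit model $\wtZ_\varepsilon=\wtW\cup_{\wtM}\big((T^*\wtQ\times[\varepsilon,\infty)\times\R)\setminus DT^*\wtL\big)$ already introduced before Proposition~\ref{prop:ev'proper}, where all the pieces and their gluings are spelled out, so that the Mayer--Vietoris sequence in the universal cover is manifestly the lift of the one downstairs.
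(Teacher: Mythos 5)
Your treatment of case ($\alpha$) is essentially the paper's argument, even though you bury it in the ``main obstacle'' paragraph: since $\wtQ$ is contractible and $\wtQ\rightarrow\wtM\rightarrow\wtW$ is a homology epimorphism (Proposition~\ref{prop:homsurj}), $\wtW$ is acyclic and simply connected, hence contractible, so $\wtW_0\rightarrow\wtW$ is automatically an equivalence of contractible spaces and no injectivity argument is needed. Your concluding step (Whitehead applied to $W_0\rightarrow W$ after the $\pi_1$-isomorphism) is an acceptable variant of the paper's route through the cobordism $\{M_0,X,M\}$.

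The genuine gap is in case ($\beta$), where $Q$ need not be aspherical and you really do need injectivity of $H_*(\wtW_0)\rightarrow H_*(\wtW)$. Neither of your two proposed methods delivers it. The duality route is circular as written: you justify the vanishing of $H_*(\wtX,\wtM_0)\cong H_*(\wtW,\wtW_0)$ by saying ``$\wtW_0\rightarrow\wtW$ is a homology iso on the part already known,'' which is exactly the statement to be proved; moreover, Lefschetz duality on the \emph{non-compact} manifold $\wtX$ relates $H_*(\wtX,\wtM_0)$ to \emph{compactly supported} cohomology $H_c^*(\wtX,\wtM)$, so even granting one vanishing you cannot pass to $H_*(\wtX,\wtM)=0$ the way Lemma~\ref{lem:cobordism} does downstairs, where compactness of $X$ is essential. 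The lifted Mayer--Vietoris route fares no better: the proper degree-$1$ evaluation map from the compact moduli space of spheres lives only on the base; its replacement on universal covers is precisely the infinite-strip analysis of Section~\ref{subsection:strips}, and Proposition~\ref{prop:homsurj} extracts from it only an \emph{epimorphism} on homology --- there is no degree-$1$ map onto $\wtW'$ and hence no five-lemma argument giving injectivity. The paper sidesteps this entirely in case ($\beta$): it never proves that $\wtW_0\rightarrow\wtW$ is a homology isomorphism at the outset, but instead invokes the algebraic argument of \cite[Lemma~8.1]{bgz}, which combines the simplicity of $M$, the vanishing $H_*(X,M_0)=0$ established \emph{downstairs} in Lemma~\ref{lem:cobordism}, the $\pi_1$-isomorphicity of $M_0\rightarrow X$, and the homology epimorphism $\wtM_0\rightarrow\wtW$ to conclude $\pi_k(X,M_0)=0$ for all $k$ directly. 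That inductive interplay between the base-level homology vanishing and the trivial $\pi_1$-action is the missing idea in your proposal.
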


Since the fundamental group of an aspherical manifold is torsion-free,
and the topological Borel conjecture holds for the
torus~\cite{luck10}, ($\alpha$)
implies that $Q$ is homeomorphic to a torus. However,
as in Proposition~\ref{prop:homsurj}, we may replace the requirement
in ($\alpha$) that $\pi_1(Q)$ be abelian by the less restrictive
assumption that the inclusion $M\rightarrow W$ is $\pi_1$-injective.

Recall that a topological space is called \emph{simple} if
its fundamental group acts trivially on all its homotopy group.
The action of $\pi_1$ on itself is given by conjugation, so a simple
space has an abelian fundamental group.

\begin{proof}[Proof of Theorem~\ref{thm:homotopequiv}]
Under assumption ($\alpha$), we argue as follows.
The universal cover $\wtQ$ is contractible,
hence so is $\wtW$ by Proposition~\ref{prop:homsurj}.
So the inclusion $\wtW_0\rightarrow\wtW$
of universal covers is a homotopy equivalence
(of contractible spaces). This means that $H_k(\wtW,\wtW_0)=0$
for all~$k$.

The remaining argument is analogous
to the proof of \cite[Theorem~7.2]{bgz}. Here are the details.
By excision, and with the cobordism $\{M_0,X,M\}$ as defined
in Section~\ref{subsection:cobordism}, we have $H_k(\wtX,\wtM_0)=0$
for all~$k$. The relative Hurewicz theorem implies $\pi_k(\wtX,\wtM_0)=0$
for all $k$. Since the inclusion $M_0\rightarrow X$ is an isomorphism
on~$\pi_1$, as remarked after the proof of Lemma~\ref{lem:cobordism},
we also have $\pi_k(X,M_0)=0$ for all~$k$. Whitehead's theorem then
implies that $M_0$ is a strong deformation retract of~$X$.
Hence $W=W_0\cup_{M_0}X\simeq W_0$.
 
Under assumption ($\beta$), the fundamental group $\pi_1(Q)$
must be abelian, since the fundamental groups
of $M=ST^*(Q\times S^1)$ and $Q\times S^1$ are isomorphic,
and the former is abelian thanks to $M$ being a simple space.

As before, we need to show that all relative homotopy groups $\pi_k(X,M_0)$
are zero. This is precisely the content of \cite[Lemma~8.1]{bgz};
we only need to verify that the subsidiary results cited in the
proof of that lemma are available in our more general setting
discussed here.

First of all, we need the inclusion $M_0\rightarrow X$ to be
$\pi_1$-isomorphic, which is guaranteed by Theorem~\ref{thm:pi1surjective},
Remark~\ref{rem:pi1-abelian} and the comments at the end of
Section~\ref{subsection:cobordism}. Secondly, we need the
inclusion $\wtM_0\rightarrow\wtW$ to be surjective on homology,
which is guaranteed by Proposition~\ref{prop:homsurj}.
\end{proof}

For Stein fillings $(W,\omega)$ of dimension at least $6$
we see from the handlebody structure
that $H_2(W,M)=0$ and $\pi_2(W,M)=0$ by general position.
Hence, the inclusion $M\rightarrow W$ is $\pi_1$-injective,
and thus $\pi_1$-isomorphic by Theorem~\ref{thm:pi1surjective}.
This observation leads to the following theorem.

\begin{thm}
\label{thm:Stein}
Let $Q$ be a closed, connected manifold of dimension $n-1\geq 2$
with $\pi_1(Q)$ infinite and $H_{n-2}(\widetilde{Q})=0$.
Let $(W,\omega)$ be a Stein filling of $M=ST^*(Q\times S^1)$.
Assume that $\chi(Q)=0$, so that hypothesis {\rm (H-i)}
is satisfied. Then $W$ is homotopy equivalent to $DT^*(Q\times S^1)$.
\end{thm}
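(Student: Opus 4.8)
The plan is to reduce Theorem~\ref{thm:Stein} to Theorem~\ref{thm:homotopequiv} by verifying its hypotheses in the Stein setting. First I would record the standard handlebody observation: since $\dim W = 2n \geq 6$ (note $n = \dim(Q\times S^1) \geq 3$ because $\dim Q = n-1\geq 2$), a Stein filling $W$ admits a handle decomposition with handles of index at most $n$, and $M = \partial W$ is built from handles of index at most $n-1$. General position then gives $H_2(W,M) = 0$ and $\pi_2(W,M) = 0$; in particular the inclusion $M \rightarrow W$ is $\pi_1$-injective. Combined with Theorem~\ref{thm:pi1surjective} (applicable because hypothesis (H-i) holds by the assumption $\chi(Q) = 0$ together with $H_2(W,M)=0$), the inclusion $M \rightarrow W$ is then $\pi_1$-isomorphic. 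This is the content of the paragraph preceding the theorem statement, so I would simply cite it.

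Next I would verify condition $(\alpha)$ or $(\beta)$ of Theorem~\ref{thm:homotopequiv} is \emph{not} what we directly use; rather, the point of the present theorem is that the weaker topological hypotheses $\pi_1(Q)$ infinite and $H_{n-2}(\widetilde Q) = 0$ should suffice to run the Whitehead-theorem argument of Theorem~\ref{thm:homotopequiv} directly. Concretely: by Proposition~\ref{prop:homsurj} (whose hypotheses hold, since the inclusion $M\rightarrow W$ is $\pi_1$-injective, hence we may pass to universal covers), the composition $\widetilde Q \rightarrow \widetilde M \rightarrow \widetilde W$ is a homology epimorphism, and hence so is $\widetilde W_0 \rightarrow \widetilde W$, where $W_0 \simeq DT^*(Q\times S^1)$. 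Now $\widetilde{W_0} \simeq \widetilde{Q\times S^1} = \widetilde Q \times \R$, which is homotopy equivalent to $\widetilde Q$. The hypothesis $H_{n-2}(\widetilde Q) = 0$ is exactly what is needed to conclude that the relative homology $H_k(\widetilde W, \widetilde{W_0})$ vanishes in the relevant range: since $\ell \leq n-1$ and Theorem~\ref{thm:inducedmap} already gives $H_k(W_0)\cong H_k(W)$, one lifts to the covers, uses the homology surjectivity together with the fact that $\widetilde{W_0}$ has homology concentrated in degrees $\leq n-2$ (as $\widetilde Q$ is an open $(n-1)$-manifold, so $H_{n-1}(\widetilde Q)=0$ automatically, and $H_{n-2}(\widetilde Q)=0$ by hypothesis), to force $H_*(\widetilde W, \widetilde{W_0}) = 0$ in all degrees.

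With $H_k(\widetilde W, \widetilde{W_0}) = 0$ for all $k$ established, the rest proceeds verbatim as in the proof of Theorem~\ref{thm:homotopequiv} under assumption $(\alpha)$: by excision and the cobordism $\{M_0, X, M\}$ of Section~\ref{subsection:cobordism} we get $H_k(\widetilde X, \widetilde{M_0}) = 0$; the relative Hurewicz theorem (valid since $\widetilde{M_0}$ and $\widetilde X$ are simply connected) gives $\pi_k(\widetilde X, \widetilde{M_0}) = 0$; the inclusion $M_0 \rightarrow X$ being $\pi_1$-isomorphic (comments at the end of Section~\ref{subsection:cobordism}, using $\pi_1$-isomorphicity of $M\rightarrow W$) upgrades this to $\pi_k(X, M_0) = 0$; and Whitehead's theorem then shows $M_0$ is a strong deformation retract of $X$, whence $W = W_0 \cup_{M_0} X \simeq W_0 \simeq DT^*(Q\times S^1)$.

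\textbf{Main obstacle.} The delicate point is the homological step in the second paragraph: showing that $H_{n-2}(\widetilde Q) = 0$, together with the homology epimorphism from Proposition~\ref{prop:homsurj} and the (unlifted) isomorphism $H_*(W_0)\cong H_*(W)$ of Theorem~\ref{thm:inducedmap}, really does force $H_*(\widetilde W, \widetilde{W_0}) = 0$ in \emph{every} degree rather than just below the critical dimension. One must be careful about whether a surjection on homology of the covers plus control of $H_*(\widetilde{W_0})$ in a range genuinely pins down $H_*(\widetilde W)$; this is where the open-manifold vanishing $H_{n-1}(\widetilde Q) = 0$ and the subcritical bound $\ell \leq n-1$ both enter, and the argument must be organised so that no stray class in the top relative degree survives. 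Everything else is a faithful transcription of the arguments already developed for Theorem~\ref{thm:homotopequiv}.
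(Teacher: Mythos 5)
Your overall strategy is the right one --- reduce to the Whitehead/deformation-retract argument of Theorem~\ref{thm:homotopequiv}($\alpha$) by establishing $H_k(\wtW,\wtW_0)=0$ in all degrees --- and your treatment of the high degrees is exactly the paper's: $\pi_1(Q)$ infinite forces $\wtQ$ non-compact, so $H_{n-1}(\wtQ)=0$, which with the hypothesis $H_{n-2}(\wtQ)=0$ and Proposition~\ref{prop:homsurj} kills $H_k(\wtW)$ and $H_k(\wtW_0)$ for $k\geq n-2$. But there is a genuine gap at the point you yourself flag as the ``main obstacle'': in degrees $k\leq n-2$ you never produce the vanishing of $H_k(\wtW,\wtW_0)$. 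The two devices you invoke do not deliver it. A homology \emph{epimorphism} $\wtW_0\rightarrow\wtW$ plus vanishing of $H_*(\wtW_0)$ in high degrees says nothing about injectivity in low degrees: in the long exact sequence, $H_k(\wtW,\wtW_0)$ maps onto the kernel of $H_{k-1}(\wtW_0)\rightarrow H_{k-1}(\wtW)$, and that kernel is not controlled by surjectivity. Nor can you ``lift'' the untwisted isomorphism $H_*(W_0)\cong H_*(W)$ of Theorem~\ref{thm:inducedmap} to universal covers: homology of covers is homology with $\Z[\pi_1]$-coefficients, and an isomorphism with $\Z$-coefficients does not imply one with twisted coefficients. (Your appeal to ``$\ell\leq n-1$'' also misfires: that bound is the cellular dimension of the ambient subcritical $Z$, whereas the relevant object here is $W$ itself, whose cellular dimension is exactly $n$ --- it is a critical, not subcritical, Stein filling.)

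The paper closes this gap with a three-step argument that is entirely absent from your proposal. First, since the Stein filling $W$ has the homotopy type of an $n$-complex, Poincar\'e--Lefschetz duality and the universal coefficient theorem give $H_k(W,M)\cong FH_{2n-k}(W)\oplus TH_{2n-k-1}(W)=0$ for $k\leq n-1$, and general position of relative $k$-discs against the $n$-skeleton gives $\pi_k(W,M)=0$ in the same range; hence $H_k(\wtW,\wtM)=0$ for $k\leq n-1$. Second, regarding $L=Q\times S^1$ as a section of $M=ST^*L$, a relative $k$-disc with $k\leq n-2$ can be pushed off the antipodal section and into $L$, so $\pi_k(M,L)=0$ and hence $H_k(\wtM,\wtL)=0$ for $k\leq n-2$. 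Third, the long exact sequence of the triple $(\wtW,\wtM,\wtL)$ yields $H_k(\wtW,\wtW_0)=H_k(\wtW,\wtL)=0$ for $k\leq n-2$, i.e.\ isomorphisms on homology in degrees $k\leq n-3$, which together with your high-degree vanishing gives an isomorphism in every degree. Without some version of this low-degree input your proof does not go through; with it, your concluding paragraph (excision, relative Hurewicz, $\pi_1$-isomorphicity of $M_0\rightarrow X$, Whitehead) is a faithful transcription of the paper's endgame.
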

 
\begin{proof}
The $2n$-dimensional Stein manifold $W$ has the homotopy type of a cell
complex of dimension at most~$n$. In fact, there are no
subcritical fillings of unit cotangent bundles,
see~\cite[Proposition~3.9]{bgz}, so the cellular dimension
of $W$ is actually equal to~$n$. From
\[ H_k(W,M)\cong H^{2n-k}(W)\cong FH_{2n-k}(W)\oplus
TH_{2n-k-1}(W)\]
we conclude that $H_k(W,M)=0$ for $k\leq n-1$, since
$FH_{\ell}(W)=0$ for $\ell\geq n+1$, and $TH_{\ell}=0$ for
$\ell\geq n$, as there are no $(n+1)$-cells in~$W$.

Similarly one sees that $\pi_k(W,M)=0$ for $k\leq n-1$,
since any relative $k$-disc with $k\leq n-1$ can be made disjoint
from the $n$-dimensional cellular skeleton of~$W$.
It follows that $\pi_k(\wtW,\wtM)=0$ and $H_k(\wtW,\wtM)=0$
for $k\leq n-1$.

As earlier, we regard $L=Q\times S^1$ as a section of $M=ST^*(Q\times S^1)$.
By general position, we have $\pi_k(M,L)=0$
for $k\leq n-2$, since in this range a relative $k$-disc can be made
disjoint from the section antipodal to~$L$, and then be pushed into~$L$.
It then follows that $H_k(\wtM,\wtL)=0$ for $k\leq n-2$.

From the homology long exact sequence of the triple
$(\wtW,\wtM,\wtL)$ we then deduce that
$H_k(\wtW,\wtW_0)=H_k(\wtW,\wtL)=0$ for $k\leq n-2$. So the
inclusion $\wtW_0\rightarrow\wtW$ induces isomorphisms on homology
in degrees $k\leq n-3$.

On the other hand, the assumption on $\pi_1(Q)$ being infinite implies
that $\wtQ$ is not compact, and hence $H_{n-1}(\wtQ)=0$.
With the further homological assumption on $\wtQ$ in the theorem,
we have $H_k(\wtQ)=0$ for $k\geq n-2$. With Proposition~\ref{prop:homsurj}
we find that $H_k(\wtW)=0$ for $k\geq n-2$. The same is obviously
true for $\wtW_0$.

Thus, the inclusion $\wtW_0\rightarrow\wtW$ induces an isomorphism
on all homology groups, and hence $H_k(\wtW,\wtW_0)=0$ for all~$k$.
The argument now concludes as in case ($\alpha$) of
Theorem~\ref{thm:homotopequiv}.
\end{proof}

\begin{proof}[Proof of Theorem~\ref{thm:main}~(a)]
The manifolds in (a-i) and (a-ii) satisfy the assumptions
of Theorem~\ref{thm:Stein}.
\end{proof}
\subsection{Diffeomorphism type}
\label{subsection:difftype}
In this section we prove part (b) of Theorem~\ref{thm:main}.
The key point is to show that the cobordism $\{M_0,X,M\}$,
under appropriate assumptions, is an $h$-cobordism. Compared with the
previous discussion, one needs to ensure that the `upper' inclusion
$M\rightarrow X$ is likewise a homotopy equivalence. An additional
assumption on the vanishing of the Whitehead group $\Wh(\pi_1(M))$
then guarantees $\{M_0,X,M\}$ to be an $s$-cobordism, and hence
diffeomorphic to a product $M\times [0,1]$ by the $s$-cobordism theorem.

\begin{thm}
\label{thm:diff-type}
Let $Q$ be a closed, connected manifold
and $(W,\omega)$ a filling
of $M=ST^*(Q\times S^1)$ satisfying hypothesis~{\rm (H)}.
If $M$ is a simple space and $\Wh(\pi_1(M))=0$, then
$W$ is diffeomorphic to $DT^*(Q\times S^1)$.
\end{thm}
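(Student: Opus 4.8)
The plan is to realise $W$ as the union of the model piece $W_0\cong DT^*(Q\times S^1)$ with the cobordism $X=W\setminus\Int(W_0)$ introduced in Section~\ref{subsection:cobordism}, to prove that $\{M_0,X,M\}$ is an $s$-cobordism, and then to apply the $s$-cobordism theorem to get $X\cong M_0\times[0,1]$, so that $W=W_0\cup_{M_0}X\cong W_0$. First note that hypothesis (H) makes $(W,\omega)$ symplectically aspherical: under (H-i) this is Remark~\ref{rem:OV}(1), under (H-ii) it is assumed outright. Since $M$ is a simple space, $\pi_1(M)$ --- hence also $\pi_1(Q)$ --- is abelian, so the inclusion $M\rightarrow W$ is $\pi_1$-isomorphic by Theorem~\ref{thm:pi1surjective} and Remark~\ref{rem:pi1-abelian}; Theorem~\ref{thm:homotopequiv} then applies under its assumption~($\beta$) and tells us that $M_0$ is a strong deformation retract of~$X$, so in particular the lower inclusion $M_0\rightarrow X$ is a homotopy equivalence. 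By the discussion at the end of Section~\ref{subsection:cobordism} together with \cite[Lemma~5.1]{bgz}, both boundary inclusions $M_0\rightarrow X$ and $M\rightarrow X$ are moreover $\pi_1$-isomorphic. Finally, $\dim X=2n\geq 6$, because $\dim Q\geq 2$ (under (H-i) by Proposition~\ref{prop:geq3}, and under (H-ii) since $Q=S^1\times N$ with $\dim N\geq 1$).

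It remains to promote the upper inclusion $M\rightarrow X$ to a homotopy equivalence, so that $\{M_0,X,M\}$ becomes an $h$-cobordism. Lifting the strong deformation retraction of $X$ onto $M_0$ to universal covers shows that $\wtX$ deformation retracts onto $\wtM_0$, hence $H_*(\wtX,\wtM_0)=0$. As $X$ is a compact oriented $2n$-manifold with $\partial X=M_0\sqcup M$, Poincar\'e--Lefschetz duality with local coefficients in $\Z[\pi_1(X)]$ gives
\[ H^k\bigl(X,M;\Z[\pi_1(X)]\bigr)\;\cong\;H_{2n-k}\bigl(X,M_0;\Z[\pi_1(X)]\bigr)\;=\;H_{2n-k}(\wtX,\wtM_0)\;=\;0 \]
for all~$k$. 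Thus $C_*(\wtX,\wtM)$ is a bounded complex of finitely generated free $\Z[\pi_1(X)]$-modules whose $\Z[\pi_1(X)]$-dual is acyclic, hence chain-contractible; dualising a contraction and using the double-dual identification, $C_*(\wtX,\wtM)$ is itself chain-contractible, so $H_*(\wtX,\wtM)=0$. Since $M\rightarrow X$ is $\pi_1$-isomorphic, the relative Hurewicz theorem yields $\pi_k(\wtX,\wtM)=0$, hence $\pi_k(X,M)=0$ for all~$k$, and Whitehead's theorem makes $M\rightarrow X$ a homotopy equivalence.

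Hence $\{M_0,X,M\}$ is an $h$-cobordism; its Whitehead torsion lies in $\Wh(\pi_1(X))\cong\Wh(\pi_1(M))=0$, so it is an $s$-cobordism. Since $\dim X=2n\geq 6$, the $s$-cobordism theorem supplies a diffeomorphism $X\cong M_0\times[0,1]$ restricting to the identity on $M_0=M_0\times\{0\}$. Gluing this back to $W_0$ along $M_0$ amounts to attaching an external collar, so $W=W_0\cup_{M_0}X$ is diffeomorphic to $W_0=DT^*(Q\times S^1)$.

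The only genuinely hard input is Theorem~\ref{thm:homotopequiv}, which rests on the moving-complex-hypersurface and infinite-strip analyses of Sections~\ref{subsection:pi1}--\ref{subsection:strips}; granted that, everything above is formal. The one point that merits a line of care is the ring-theoretic step in the second paragraph --- that a bounded complex of finitely generated free modules over $\Z[\pi_1(X)]$ with acyclic dual is acyclic --- which follows from the standard facts that a bounded acyclic complex of projectives is contractible and that the canonical map to the double dual is an isomorphism for finitely generated free modules.
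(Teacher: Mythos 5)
Your proposal is correct, and its skeleton is the same as the paper's: decompose $W=W_0\cup_{M_0}X$, use Theorem~\ref{thm:homotopequiv} under assumption~($\beta$) to make the lower inclusion $M_0\rightarrow X$ a homotopy equivalence, upgrade $\{M_0,X,M\}$ to an $h$-cobordism, and conclude with the $s$-cobordism theorem using $\Wh(\pi_1(M))=0$ and $\dim X=2n\geq 6$ (you verify the dimension bound explicitly via Proposition~\ref{prop:geq3}, which the paper leaves implicit). The one place you diverge is the upper inclusion $M\rightarrow X$: the paper simply cites \cite[Lemma~8.2]{bgz}, which rests on the simplicity of $M_0$ and hence of $X$, whereas you give a self-contained Poincar\'e--Lefschetz duality argument over $\Z[\pi_1(X)]$, combined with the standard fact that a bounded complex of finitely generated free modules with contractible dual is itself contractible, followed by relative Hurewicz and Whitehead on $(\wtX,\wtM)$. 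That argument is correct; its hypotheses ($\pi_1$-isomorphicity of both boundary inclusions, compactness and orientability of $X$) are all in place, and it has the mild advantage of not invoking simplicity at this step --- simplicity is still indispensable upstream, in the appeal to \cite[Lemma~8.1]{bgz} inside the proof of Theorem~\ref{thm:homotopequiv}($\beta$). So the two proofs differ only in whether this duality lemma is quoted or reproved; nothing is gained or lost mathematically.
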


\begin{proof}
The `lower' inclusion $M_0\rightarrow X$ is a homotopy equivalence
by the proof of Theorem~\ref{thm:homotopequiv} under assumption~($\beta$).
The `upper' inclusion $M\rightarrow X$ is a homotopy equivalence
thanks to \cite[Lemma 8.2]{bgz}, which only uses the simplicity
of~$M_0$ (and hence that of~$X$).
\end{proof}

\begin{rem}
Under assumption ($\alpha$) of Theorem~\ref{thm:homotopequiv},
the above argument breaks down, because the corresponding
argument in \cite{bgz} is based on Lemma~5.2
in that paper, for which the assumption that the filling is
subcritical is crucial.
\end{rem}

\begin{proof}[Proof of Theorem~\ref{thm:main}~(b)]
The manifolds $Q$ in Theorem~\ref{thm:main}~(b) satisfy
hypothesis (H-ii). The remaining assumptions of
Theorem~\ref{thm:diff-type} are satisfied
thanks to \cite[Example~9.3.(2)]{bgz}.
\end{proof}
\begin{ack}
A part of this research was done while all three authors
took part in the workshop \emph{Conservative dynamics and its
interactions} at the Bernoulli Center (CIB) of the EPFL,
organised by Felix Schlenk and Leonid Polterovich.
We are grateful to the organisers and the Bernoulli Center
for providing ideal conditions for collaborative research.
We thank Jonathan Bowden for useful comments on a draft
version of this paper, and Thomas Schick for the
reference to the Borel conjecture. We thank an anonymous referee
for drawing our attention to an incomplete argument in a previous
version of the proof of Lemma~\ref{lem:bubbling-ii}.
\end{ack}

\end{document}